\documentclass[a4paper]{article}
\usepackage[margin=2.3cm]{geometry}
\usepackage{latexsym}
\usepackage{indentfirst}
\usepackage{graphicx}
\usepackage{amsmath}
\usepackage{amssymb}
\usepackage{url}
\usepackage{csquotes}
\usepackage{amsthm}
\usepackage{parskip}

\newtheorem{definition}{Definition}
\newtheorem{example}{Example}

\newtheorem{remark}{Remark}
\newtheorem{theorem}{Theorem}
\newtheorem{prop}{Proposition}
\newtheorem{corollary}{Corollary}

\begin{document}

\title{Irreducible Bases and Subgroups of a Wreath Product in Applying to Diffeomorphism Groups Acting on the \text{M\"{o}bius} Band
}

\author{Ruslan V. Skuratovskii$^1$ and Aled Williams$^2$
\\ \\
$^1$Lecturer of IAPM, Kiev, Frometovskaya 2,\\ and Igor Sikorsky Kiev Polytechnic Institute, Kiev, \\ \hspace{1.0mm} \texttt{ruslcomp@mail.ru} \\ 
$^2$Cardiff University - \texttt{wiliamsae13@cardiff.ac.uk} \\
}

\date{}
\maketitle

\begin{abstract}
We generalize the
results presented in the book of Meldrum J. \cite{Meld} about commutator subgroup of wreath products since, as well as considering regular wreath
products, we consider those which are not regular (in the sense that the active group $\mathcal{ A}$ does not have to act
faithfully). The commutator of such a group, its minimal generating set and the centre of such products has been
investigated here.

The quotient group of the restricted and unrestricted wreath product by its commutator is found. The generic sets of commutator of wreath 
product were investigated.

The structure of wreath product with non-faithful group action is investigated.

Given a permutational wreath product sequence of cyclic groups, we investigate its minimal generating set, the minimal
generating set for its commutator and some properties of its commutator subgroup.

We strengthen the results from the author \cite{SkVC, SkAr, SkAr2, SkMal} and construct the minimal generating set for the wreath
product of both finite and infinite cyclic groups, in addition to the direct product of such groups.

The fundamental group of orbits of a Morse function $f:M\to \mathbb{R}$ defined upon a \text{M\"{o}bius} band $M$ with
respect to the right action of the group of diffeomorphisms $\mathcal{D}(M)$ has been investigated. In particular, we
describe the precise algebraic structure of the group $\pi_1 O(f)$. A minimal set of generators for the group of orbits of
the functions ${{\pi }_{1}}({{O}_{f}},f)$ arising under the action of the diffeomorphisms group stabilising the function
$f$ and stabilizing $\partial M$ have been found. The Morse function $f$ has critical sets with one saddle point.

We consider a new class of wreath-cyclic geometrical groups. The minimal generating set for this group and for the
commutator of the group are found.

This paper after previous Arxiv versions from 2019 \cite{SkArM, SkArM3} with previous title "Minimal generating set and structure of wreath product of groups with non-faithful action, comutator subgroup of wreath product and the fundamental group of orbit of Morse function $\pi_1 O(f)$" was published  \cite{SkRendi}.

\vspace{3.0mm}

\textbf{Acknowledgement}: We are grateful to Antonenko Alexandr for a graphical support and Sergey Maksymenko for Morse function description, also we thanks to Samoilovych I. \\
\textbf{Key words}: wreath product; minimal generating set of commutator subgroup, center of non regular
wreath product, quotient by commutator subgroup of wreath product, semidirect product, fundamental group of orbits of one
Morse function. \\ 
\textbf{2000 AMS subject classifications}:  20B27, 20E08, 20B22, 20B35,20F65,20B07.

\end{abstract}

\section{Introduction}
Lucchini A.
\cite{Luc} previously investigated a case of the generating set of $C_{p}^{n-1}\wr G$, where $G$ denotes a finite
$n$-generated group, $p$ is a prime which does not divide the order $|G|$ and $C_p$ denotes the cyclic group of order $p$.
The results of Lucchini A.
\cite{Luc} tell us that the wreath product $C_{p}^{n-1}\wr G$ is also $n$-generated.  We firstly consider the active group
$G$ which is cyclic and then generalize this wreath product for both iterated wreath products and for the direct product of
wreath products of cyclic groups. It should be noted that a similar question for iterated wreath product was studied by
Bondarenko I.
\cite{Bon}.


Maksymenko S.
\cite{Maks} has proven that the $n$-th homotopy groups of the orbit of $f$, $O(f)$, with respect to the right action of the
group, $\mathrm{Diff}(M)$, of diffeomorphisms of $M$, coincides with those of $M$ for $n\geq3$, i.e. $\pi_2 O(f)=0$, while,
for the fundamental group $\pi_1 O(f)$, it is known that it contains a free abelian subgroup of finite index. Despite this,
information regarding the fundamental group $\pi_1 O(f)$ remains incomplete. We provide some insight by finding the minimal
generating set and its relations for the group $\pi_1 O(f)$.

All theorems and propositions are obtained and proved by the Ruslan Skuratovskii, corollaries and examples were obtained in collaboration with the co-author.

This paper after previous Arxiv versions \cite{SkArM, SkArM3} with previous title "Minimal generating set and structure of wreath product of groups with non-faithful action, comutator subgroup of wreath product and the fundamental group of orbit of Morse function $\pi_1 O(f)$" was published  \cite{SkRendi}.

\section{Preliminaries}
Let $G$ be a group and let $d(G)$ denote its minimal number of generators \cite{Luc, Bon}.
A diffeomorphism $h : M \to M$
is said to be $f$-preserving if $f \circ h = f$. This is equivalent to the assumption
that $h$ is invariant each level-set, i.e. $f^{-1}(c)$, $c\in P$ of $f$, where $P$ denotes either the real line $R$ or the
circle $S^1$.

The \textit{commutator width} of $G$ \cite{nikolov}, denoted by $cw(G)$, is defined to be the least
integer $n$, such that every element of $G'$ is a product of at most $n$ commutators if such an integer exists, and
otherwise is $cw(G) = \infty$.
The first example of a finite perfect group with $cw(G) > 1$ was presented by Isaacs I. \cite{Isacs}. The property of
commutator widths for groups and elements has proven to be important and in particular, its connections with stable
commutator length and bounded cohomology has become significant.

Meldrum J. \cite{Meld} briefly considered one form of commutators of the wreath product $A \wr B$. In order to obtain a
more detailed description of this form, we take into account the commutator width by $(cw(G))$ as presented in work of Muranov
A. \cite {Mur}. 
${S_{{2^{k}}}})$ and $cw (C_p \wr B)$ \cite{SkComm}.

The form of commutator presentation \cite{Meld} has been given here in the form of wreath recursion \cite{Lav} and
additionally, its commutator width has been studied.

The subtree of $X^{*}$ (or $\mathbb{T}$) which is induced by the set of vertices $\cup_{i=0}^k X^i$ is denoted by
$X^{[k]}$ or $\mathbb{T}_k$. Denote the restriction of the action of an automorphism $g\in AutX^*$ to the subtree $X^{[l]}$ by
$g_{(v)}|_{X^{[l]}}$. It should be noted that a restriction $g_{(v)}|_{X^{[1]}} $ is called the \textit{vertex permutation}
(v.p) of $g$ in a vertex $v$.

\section{Commutator subgroup and center of wreath product with non-faithful action}
 In this work, we strengthen and continue the previous results of the author \cite{SkVC} and will additionally consider a new class of groups. This
class is precisely the \emph{wreath-cyclic} groups and will be denoted by $\Im $. Let $G \in \Im $, then this class is
constructed by formula:
$$G=(\underset{j_0=0}{\overset{n_0}{\mathop{\wr }}}{{C}_{k_{j_0}}})\times (\underset{j_1=0}{\overset{n_1}{\mathop{\wr
}}}{C_{k_{j_1}}})\times \cdots \times(\underset{j_l=0}{\overset{n_l}{\mathop{\wr }}}{{C}_{{{k}_{j_l}}}}), 1 \leq k_{j_i} <
\infty, n_i<\infty,$$
where the orders of ${{C}_{{{i}_{j}}}}$ are denoted by ${{i}_{j}}$.

It should be noted that at the end of this product, a semidirect product could arise with a given homomorphism $\phi $,
which is defined by a free action on the set $\mathbb{Z}$. In other words, one would obtain a group of the form ${{\left(
\prod\limits_{i=1}^{k}{{{G}_{i}}} \right)}^{n}}{{\ltimes }_{\phi }}\mathbb{Z}.$

Note that the last group here is isomorphic to one of the fundamental orbital groups ${{O}_{f}}(f)$ of the Morse function
$f$. Namely, we have ${{\pi }_{0}}\left( S,f\left| _{\partial M} \right. \right)$ \cite{Maks}.

Consider now the group $H=\underset{j=1}{\overset{n}{\mathop{\wr }}}{{C}_{{{i}_{j}}}}$, whose orders ${{i}_{j}}$ for all
${{C}_{{{i}_{j}}}}$ are mutually coprime for all $j>1$ and whose number of cyclic factors in the wreath product is finite.
We will call such group $H$ \emph{wreath-cyclic}.

Note that the multiplication rule of automorphisms $g$, $h$ which are presented in the form of wreath recursion \cite{Ne}
$g=(g_{(1)},g_{(2)},\ldots,g_{(d)})\sigma_g, \
h=(h_{(1)},h_{(2)},\ldots,h_{(d)})\sigma_h,$ is given precisely by the formula:
$$g\cdot h=(g_{1}h_{\sigma_g(1)},g_{2}h_{\sigma_g(2)},\ldots,g_{d}h_{\sigma_g(d)})\sigma_g \sigma_h.$$

In the general case, if an active group is not cyclic, then the cycle decomposition of an $n$-tuple for automorphism
sections will induce the corresponding decomposition of the $\sigma_g$.
If $\sigma$ is v.p of automorphism $g$ at $v_{ij}$ and all the vertex permutations below $v_{ij}$ are trivial, then we do
not distinguish $\sigma$ from the section $g_{v_{ij}}$ of $g$ which is defined by it. That is to say, we can write
$g_{v_{ij}}=\sigma = (v_{ij}) g$ as proposed by Bartholdi L., Grigorchuk R. and {\v{S}}uni Z. \cite{Grig}.

\subsection{Minimal generating set of direct product of wreath product of cyclic groups}
We now make use of both rooted and directed automorphisms as introduced by Bartholdi L., Grigorchuk R. and {\v{S}}uni Z.
\cite{Grig}. Recall that we denote a truncated tree by $\mathbb{T}$.

\begin{definition}
An automorphism of 
$\mathbb{T}$ is said to be rooted if all of its vertex permutations corresponding
to non-empty words are trivial.
\end{definition}

Let $l = x_1x_2x_3 \cdots $ be an infinite ray in $\mathbb{T}$.

\vspace{2.0mm}

\begin{definition}[]
The automorphism $g$ of $\mathbb{T}$ is said to be directed along the infinite ray $l$ if all vertex permutations along $l$
and all vertex permutations corresponding to vertices whose distance to the ray $l$ is at least two are trivial. In such
case, we say that $l$ is the spine of $g$ (as exemplified in Figure 1).
ray $l$ is at least two are trivial, then we say that the automorphism $g$ of $\mathbb{T}$ is directed along $l$ and we
will call $l$ the spine of $g$ (Figure 1).
\end{definition}

It should be noted that because we consider only truncated trees and truncated automorphisms here and for convenience, we
will say rooted automorphism instead of truncated rooted automorphism.


\begin{figure}[h]
\begin{minipage}{0.5\linewidth}
{\includegraphics[scale=0.5]{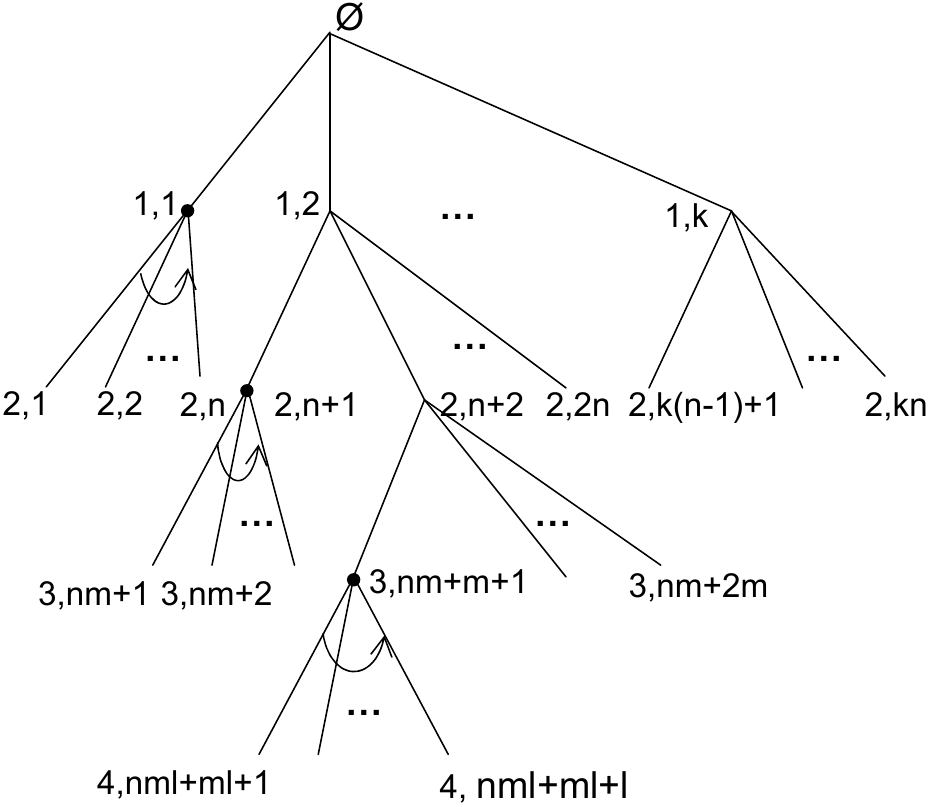}\\ ~\\}
\end{minipage}
\hfill
\begin{minipage}{0.5\linewidth}
{\includegraphics[scale=0.5]{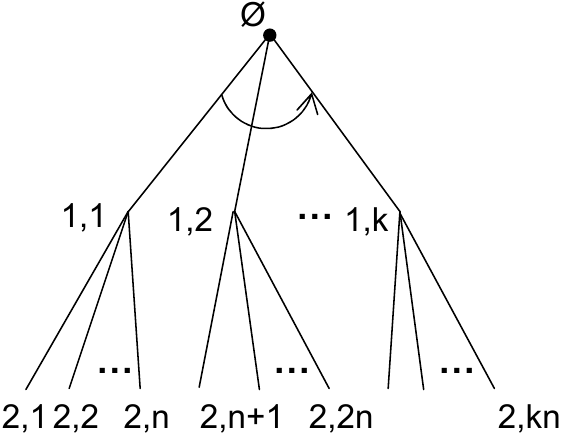}\\ ~\\}
\end{minipage}

\begin{minipage}{0.5\linewidth}
{Fig. 1. Directed automorphism}
\end{minipage}
\hfill
\begin{minipage}{0.5\linewidth}
{Fig. 2. Rooted automorphism}
\end{minipage}
\label{ris:image1}
\end{figure}


We recall in reformulated form the result of  A. Woryna, \cite{Wor} about a minimal generating set of iterated wreath product. Also we generalised this result after this theorem.
\begin{theorem}
If orders of cyclic groups ${{\mathbb{C}}_{{{n}_{i}}}}$, ${{\mathbb{C}}_{{{n}_{j}}}}$ are mutually coprime $i\ne j$, then
the group $G= {{C}_{{{i}_{1}}}} \wr {{C}_{{{i}_{2}}}} \wr \cdots \wr {{C}_{{{i}_{m}}}} $ admits two generators, namely
$\beta _{0}$, ${\beta }_{1}$.
\end{theorem}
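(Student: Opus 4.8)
The plan is to realise $G$ as a group of automorphisms of the truncated rooted tree $\mathbb{T}$ of depth $m$ with branchings $i_1,\dots,i_m$, and to argue by induction on $m$. Write $G=H\wr C$, where $C=C_{i_t}$ is the top (root-acting) cyclic factor, of order $i_t$, and $H$ is the wreath product of the remaining $m-1$ cyclic factors; $H$ appears as the $i_t$ coordinate copies $H_1,\dots,H_{i_t}$ of the base group, each acting on a subtree hanging from the root. I take $\beta_0$ to be the rooted automorphism whose root vertex permutation is an $i_t$-cycle generating $C$, and $\beta_1$ to be the directed automorphism along a ray $l$ given in wreath-recursion form by $\beta_1=(\gamma_0,\gamma_1,1,\dots,1)$, with trivial root permutation. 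Here $\gamma_0$ is the rooted generator of $H$ (a generator of the top cyclic factor of $H$, hence a vertex permutation at the single off-spine vertex $v_1$) and $\gamma_1$ is the directed generator of $H$ along the spine; one checks against the Definition above that $\beta_1$ is then genuinely directed. By the inductive hypothesis applied to $H$, we have $H=\langle\gamma_0,\gamma_1\rangle$.

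As a sanity check that the answer is $2$ and not larger, note that the abelianisation of an iterated wreath product is the direct product $C_{i_1}\times\cdots\times C_{i_m}$ of its factors; since the $i_k$ are pairwise coprime, the Chinese Remainder Theorem gives $G/G'\cong C_{i_1\cdots i_m}$, a cyclic group, and the image of $\beta_1$ already generates it. Thus $\langle\beta_0,\beta_1\rangle\,G'=G$, and the whole content of the theorem is that $G'\subseteq\langle\beta_0,\beta_1\rangle$; the induction below in fact establishes equality directly.

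The inductive step is the core. Put $K=\langle\beta_0,\beta_1\rangle$, let $a=\operatorname{ord}(\gamma_0)$ be the order of the top factor of $H$ and $b=\operatorname{ord}(\gamma_1)$ the product of the remaining factor orders carried along the spine of $\gamma_1$; by pairwise coprimality $\gcd(a,b)=1$. The key step uses this coprimality to \emph{split} the single generator $\beta_1$: since $\beta_1=(\gamma_0,\gamma_1,1,\dots,1)$ has trivial root permutation, raising to powers acts coordinatewise, so $\beta_1^{\,b}=(\gamma_0^{\,b},1,\dots,1)$ and $\beta_1^{\,a}=(1,\gamma_1^{\,a},1,\dots,1)$, where $\gamma_0^{\,b}$ and $\gamma_1^{\,a}$ generate $\langle\gamma_0\rangle$ and $\langle\gamma_1\rangle$ respectively because $\gcd(a,b)=1$. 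Thus $K$ contains copies of both generators of $H$ supported on disjoint single coordinates. Conjugating these by powers of $\beta_0$ (which cyclically permutes the $i_t$ subtrees) deposits a generator of $\langle\gamma_0\rangle$ and of $\langle\gamma_1\rangle$ into every coordinate copy $H_j$ individually. Since $\langle\gamma_0^{\,b},\gamma_1^{\,a}\rangle=\langle\gamma_0,\gamma_1\rangle=H$ by the inductive hypothesis, each $H_j\subseteq K$, whence the full base group $H_1\times\cdots\times H_{i_t}\subseteq K$; together with $\beta_0$ this yields $K=H\wr C=G$. The base case $m=1$ (where $G=C_{i_1}$ is cyclic) is immediate, and $m=2$ recovers the classical two-generation of $C_{i_1}\wr C_{i_2}$ with $\gamma_1$ trivial.

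The step I expect to be the main obstacle — and the only place the hypothesis is used — is the splitting of $\beta_1$ into two independent single-coordinate generators of the subtree group. Coprimality of $a$ and $b$ is exactly what makes $\beta_1^{\,b}$ annihilate the spine part while leaving $\gamma_0$ a generator, and symmetrically for $\beta_1^{\,a}$; without it the powers of $\beta_1$ remain entangled (one only extracts \enquote{diagonal} elements that are fixed by the subtree permutations and hence cannot generate the base group), and indeed $d(G)$ then grows with the number of repeated factors in line with $d(G/G')$. A secondary point to verify carefully is that $\beta_1$ as defined satisfies the directedness condition of the Definition and that conjugation by $\beta_0$ realises a transitive permutation of the coordinate copies $H_j$; both are routine. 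One may alternatively corroborate $G'\subseteq K$ using Meldrum's description \cite{Meld} of $(A\wr B)'$, but the splitting argument is the efficient route.
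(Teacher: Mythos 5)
Your proof is correct, and it rests on the same central mechanism as the paper's — exploiting coprimality of the orders to split the directed generator by exponentiation — but the two arguments are organized quite differently. The paper fixes the tree once and recurses down the spine: from $\beta_1=(\pi_2,\beta_2,e,\dots,e)$ it forms $\beta_1^{lcm_2}=(\pi_2^{lcm_2},e,\dots,e)$ (the spine part dies because $\mathrm{ord}(\beta_2)=i_3\cdots i_m$ divides $lcm_2$), raises this to the power $lcm_2^{-1}\ (\mathrm{mod}\ i_2)$ to recover the canonical generator $\sigma_2=(\pi_2,e,\dots,e)$, extracts the next state via $\sigma_2^{-1}\beta_1=(e,\beta_2,e,\dots,e)$, and iterates until it has rebuilt the whole Kaloujnine/Dmitruk--Sushchanskii canonical generating set (\ref{can}); that this canonical set generates $G$ is then taken from the cited literature \cite{Dm, ShNorm}. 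You instead induct on the number of factors $m$: the same coprime-power trick splits $\beta_1=(\gamma_0,\gamma_1,1,\dots,1)$ into the single-coordinate elements $(\gamma_0^{b},1,\dots,1)$ and $(1,\gamma_1^{a},1,\dots,1)$, conjugation by $\beta_0$ transports these into every coordinate, and the inductive hypothesis $H=\langle\gamma_0,\gamma_1\rangle$ then gives each base copy $H_j$, hence the full base group and, with $\beta_0$, all of $G$. What your route buys is self-containedness: you never need the external fact that the canonical tableau set generates the iterated wreath product, because the $\beta_0$-conjugation step proves generation outright; what the paper's route buys is the explicit canonical set sitting inside $\langle\beta_0,\beta_1\rangle$, which it reuses afterwards (e.g.\ in Theorem 2). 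One small slip in your side remark: the image of $\beta_1$ in $G/G'\cong C_{i_1}\times\cdots\times C_{i_m}$ is trivial in the first factor, so it generates only the $C_{i_2\cdots i_m}$ part, and you need the image of $\beta_0$ as well to conclude $\langle\beta_0,\beta_1\rangle G'=G$; this is harmless, since that remark carries no weight in your induction.
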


\begin{proof}
Construct the generators of $\underset{j=0}{\overset{n}{\mathop{\wr }}}{{C}_{{{i}_{j}}}}$ as a rooted automorphism ${{\beta
}_{0}}$ (Figure 2) and a directed automorphism ${{\beta }_{1}}$ \cite{Grig} along a path $l$ (Figure 1) on a rooted labeled
truncated tree ${{T}_{X}}$.

We consider the group $G={{C}_{{{i}_{1}}}} \wr {{C}_{{{i}_{2}}}} \wr \cdots \wr {{C}_{{{i}_{m}}}}$.
Construct the generating set of ${{C}_{{{i}_{1}}}} \wr {{C}_{{{i}_{2}}}} \wr \cdots \wr {{C}_{{{i}_{m}}}}$, where the
active group is on the left.
Denote by $lc{{m}_{1}}=lcm({{i}_{2}},{{i}_{3}},\ldots,{{i}_{m}})$ the least common multiplier of the orders by
${{i}_{2}},{{i}_{3}},\ldots,{{i}_{m}}$. In a similar fashion, we denote
$lc{{m}_{k}}=lcm({{i}_{1}},{{i}_{2}},\ldots,{{i}_{k-1}},{{i}_{k+1}},\ldots,{{i}_{m}})$ similarly.

We utilise a presentation of those wreath product elements from a tableaux of Kaloujnine L. \cite{Kal} which has the form
$\sigma =\left[ {{a}_{1}},{{a}_{2}}(x),\,\,{{a}_{3}}\left( {{x}_{1}},{{x}_{2}} \right),\ldots \right]$. Additionally, we
use a subgroup of tableau with length $n$ which has the form ${{\sigma }_{(n)}}=\left[
{{a}_{1}},{{a}_{2}}({{x}_{1}}),\ldots{{a}_{n}}({{x}_{1}},\ldots,{{x}_{n}}) \right]$. The tableaux which has first $n$
trivial coordinates was denoted in \cite{ShNorm} by
$$^{(n)}\sigma =\left[ e,\ldots,e,{{\alpha }_{n+1}}({{x}_{1}},\ldots,{{x}_{n}}),{{\alpha
}_{n+1}}({{x}_{1}},\ldots,{{x}_{n+1}}),\ldots \right].$$
The canonical set of generators for the wreath product of ${{C}_{p}}\wr \cdots \wr {{C}_{p}}\wr {{C}_{p}}$ was used by
Dmitruk Y. and Sushchanskii V. \cite{Dm} and additionally utilised by the author \cite{SkKib}. This set has form
\begin{equation} \label{can}
{{\sigma' }_{1}}=\left[ {{\pi }_{1}},\,\,e,\,\,\,e,\ldots,e \right],  {{\sigma'}_{2}}=\left[ {{e}_{1}},\,\,{{\pi
}_{2}},\,\,\,e,\ldots,e \right], \ldots ,  {{\sigma'}_{n}}=\left[ {{e}_{1}},\,\,e,\ldots,e,{{\pi }_{n}} \right].
\end{equation}

We split such a table into sections with respect to (\ref{can}), where the $i$-th section corresponds to portrait of
$\alpha$ at $i$-th level. The first section corresponds to an active group and the crown of wreath product $G$, the second
section is separated with a semicolon to a base of the wreath product.
 The sections of the base of wreath product are divided into parts by semicolon and these parts correspond  to  groups
 ${{C}_{{{i}_{j}}}}$ which form the base of wreath product. The $l$-th section of of a tableau presentation of automorphism
 $\beta_1$ corresponds to portrait of automorphism $\beta_1$ on level $X^{l}$.

The portrait of automorphisms $\beta_1$ on level $X^{l}$ is characterised by the sequence $(e, \ldots ,e,
\pi_{l},e,\ldots,e)$, where coordinate $\pi_{l}$ is the vertex number of unique non trivial v.p on $X^l$, the sequence has
${i_{0}}{i_{1}} \ldots {i_{l-1}}$ coordinates.
Therefore, our first generator has the form
${{\beta }_{0}}=\left[ {{\pi }_{1}},e,e,\ldots,e \right]$, which is the rooted automorphism. The second generator has the
form
$${{\beta }_{1}}=\left[ e;\underbrace{{{\pi
}_{2}},\,e,e,\ldots,e}_{{{i}_{1}}};\overbrace{\underbrace{e,e,\ldots,e}_{{{i}_{2}}},{{\pi
}_{3}},e,\ldots,e}^{{{i}_{1}}{{i}_{2}}};\underbrace{\overbrace{e,\ldots,e,}^{{{i}_{2}}{{i}_{3}}+i_3}{{\pi
}_{4}},e,\ldots,e}_{{{i}_{1}}{{i}_{2}}{{i}_{3}}};\,e, \,\ldots\,\, ,e \right], $$

It should be noted that after the last(fourth) semicolon (or in other words before $\pi_5$) there are
$i_2i_3i_4+i_3i_4+i_4$ trivial coordinates. There are $i_2i_3i_4i_5 + i_3i_4i_5 +i_4 i_5+i_5$ trivial coordinates before
$\pi_6$ (or in other words after the fifth semicolon but before $\pi_6$). In a section after $k-1$ semicolon the coordinate
of a non-trivial element $\pi_k$ is calculated in a similar way.
We know from \cite{ShNorm} that ${{\beta }_{1}}$is generator of $^{(2)}G$, i.e. 2-base of $G$. Recall that $^{(k)}G$ calls
\emph{k}-th base of $G$. The subgroup $^{(k)}G$ is a subgroup of all tableaux of form $^{(k)}u$ with $u\in G$.

\vspace{5.0mm}

Let ${{C}_{n}}=\left\langle {{\pi }_{n}} \right\rangle $ and set $\sigma_1 =  {\beta }_{0}$. We have to show that our
generating set $\{ {\beta }_{0}, {\beta }_{1} \}$ generates the whole canonical generating set.
For this, we obtain the second new generator ${\sigma }_{2}$ in form of the tableau
  $${\sigma }_{2}^{lc{{m}_{2}}}=\beta _{1}^{lc{{m}_{2}}}=\left[ e;\underbrace{\pi
  _{2}^{lc{{m}_{2}}},\,e,e,\ldots,e}_{{i}_{1}};\underbrace{e,e,\ldots,e}_{{{i}_{1}}{{i}_{2}}};
  \underbrace{e,e,\ldots,e}_{{{i}_{1}}{{i}_{2}}{{i}_{3}}};\,e,\,\ldots\,,\,e \right]. $$

Since $\text{ord}({{\pi }_{1}})={{i}_{1}}$ and $({{i}_{1}},\,\,lc{{m}_{1}})=1$, we find that the element $\pi
_{1}^{lc{{m}_{1}}}$ is generator of ${{C}_{{{i}_{1}}}}$ since $\text{ord}({{\pi }_{1}})=\text{ord}(\pi
_{1}^{lc{{m}_{1}}})$. We obtain that $${{\sigma }_{2}}= {{\left( \beta _{1}^{lc{{m}_{2}}}
\right)}^{lcm_{2}^{-1}(mod\,{{i}_{2}})}},$$
which corresponds to generator ${{\sigma }_{2}}$ of canonical generating set (\ref{can}).
Observe that ${{b}_{3}}=\sigma _{1}^{-1}\beta _{1}$ is generator of $^{(3)}G$, i.e. it is precisely a 3-base of $G$.


\vspace{5.0mm}

It is known \cite{ShNorm} that the generator ${{\sigma}_{2}}$ precisely generates the group that is isomorphic to the group
${{\left[ U \right]}_{2}}$  for all $2$-nd coordinate tableaux. From the same principle, one can obtain that
\[{{\sigma }_{3}}=\beta _{1}^{lc{{m}_{3}}}=\left[
e;\underbrace{e,\,e,e,\ldots,e}_{{{i}_{1}}};\overbrace{\underbrace{e,e,\ldots,e}_{{{i}_{2}}},\pi
_{3}^{lc{{m}_{3}}},e,\ldots,e}^{{{i}_{1}}{{i}_{2}}};\underbrace{\overbrace{e,\ldots,e,}^{{{i}_{2}}{{i}_{3}}}e,e,\ldots,e}_{{{i}_{1}}{{i}_{2}}{{i}_{3}}};\,e\,\ldots\,\,e
\right].\]
This  generator ${{\sigma }_{3}}$  generates the group which is isomorphic to the group of all $(2i_1+2)$-th coordinate
tableaux, which is precisely ${{\left[ U \right]}_{2i_1+2}}$ \cite{ShNorm}.
Making use of the same principle allows us to express all the ${{\sigma }_{i}}$ from our canonical generating set.

Note that if it were a self-similar group, then it would be more useful to present it in terms of wreath recursion, as the
set where ${\beta_0 }$ is the rooted automorphism. Given a permutational representation
of $C_{i_j}$ we can present our group by wreath recursion.
 We present ${\beta_1}$ by wreath recursion as ${\beta_1 } = ({{\pi}_{2}}, {{\beta }_{2}},e,e, \ldots ,e)$.
 It would be written in form $\sigma _{1}^{lcm_{2}}={{\beta_1 }^{lc{{m}_{2}}}}=({{\pi }_{2}}^{lc{{m}_{2}}},{{\beta
 }_{2}}^{lc{{m}_{2}}},e,e,\ldots,e)=({{\pi }_{2}}^{lcm(2)},e,e,\ldots,e)$, since $\text{ord}({{\pi }_{2}})={{i}_{2}}$ and
 $({{i}_{2}},\,\,lc{{m}_{2}})=1$  then the element $\pi _{2}^{lcm_{2}}$ is generator of ${{C}_{{{i}_{2}}}}$ too, because
 $\text{ord}({{\pi }_{2}})=\text{ord}(\pi _{2}^{lcm_{2}})$.

We then obtain the second generator ${{\sigma }_{2}}$ of canonical generating set by exponentiation ${{\left( \beta
_{1}^{lc{{m}_{2}}} \right)}^{lcm_{2}^{-1}(mod\,{{i}_{2}})}}=\left( {{\pi }_{2}},e,\ldots,e \right)$.
Since we have obtained ${{\sigma }_{2}}=\left( {{\pi }_{2}},e,\ldots,e \right)$, we can express $\sigma _{2}^{-1}=\left(
\pi _{2}^{-1},e,\ldots,e \right)$, where ${{\pi }_{2}}$ is a state of $\sigma_{2}$.

Consider an alternative recursive constructed generating set which consists of nested automorphism $\beta_1$ states which
are $\beta_2$, $\beta_3$,\ldots,$\beta_m$ and the automorphism $\beta_0$.
The state $\beta_2$ is expressed as follows ${\sigma }_{2}^{-1}{\beta }_{1}=\left( e, {{\beta }_{2}},e,\ldots,e \right)$.

It should be noted that a second generator of a recursive generating set could be constructed in an other way, namely
${{\beta' }_{2}}={{\beta_1}^{{{i}_{2}}}}=({{\pi }_{2}}^{{{i}_{2}}},{{\beta }_{2}}^{{{i}_{2}}},e,e,\ldots,e)=(e,{{\beta
}_{2}}^{{{i}_{2}}},\ldots,e,e)$, where ${\beta_2 }$ is the state in a vertex of the second level $X^2$.

We can then express the next state ${\beta }_{2}$ of ${\beta }_{1}$ by multiplying $\sigma _{2}^{-1}{{\beta }_{1}} = (
e,{{\beta }_{2}},e,\ldots,e )$. Therefore, by a recursive approach, we obtain ${{\beta }_{2}}= ( {{\pi }_{3}},{{\beta
}_{3}},e,\ldots,e )$ and analogously we obtain $\beta _{2}^{lc{{m}_{3}}}=\sigma _{3}^{lc{{m}_{3}}}= ( \pi
_{3}^{lc{{m}_{3}}},e,\ldots,e )$.
Similarly, we obtain $$\beta _{k-1}^{lc{{m}_{k}}}={{\sigma }^{lcm_{k}}_{k}}=\left( \pi _{k}^{lc{{m}_{k}}},e,\ldots,e
\right)$$ and ${{\sigma }_{k}}={{\left( \beta _{k-1}^{lc{{m}_{k}}} \right)}^{lcm_{k}^{-1}(\text{mod } {{i}_{k}})}}= ( {{\pi
}_{k}},e,\ldots,e )$. The $k$-th generator of the recursive generating set can therefore be expressed as ${\sigma
}_{k}^{-1}{\beta }_{k-1}=\left( e, {{\beta }_{k}},e,\ldots,e \right)$.
The last generator of our generating set has another structure, namely ${{\sigma }_{m}}=(\pi_{m} , e, \ldots, e )$ which
concludes the proof.
\end{proof}

\vspace{2.0mm}








Let $\underset{j=0}{\overset{n}{\mathop{\wr }}}{{C}_{{{i}_{j}}}}$ be generated by ${\beta }_{0}$ and ${\beta }_{1}$ and $
\underset{l=0}{\overset{m}{\mathop{\wr }}}{{C}_{{{k}_{l}}}}= \langle{ {\alpha }_{0}}, {{\alpha }_{1}} \rangle $. Denote an
order of $g$ by $|g|$.

\vspace{5.0mm}

\begin{theorem}
If $\left( |{{\alpha }_{0}}|,|{{\beta }_{0}}| \right)=1$ and $\left( |{{\alpha }_{1}}|,|{{\beta }_{1}}| \right)=1$ or
$\left( |{{\alpha }_{0}}|,|{{\beta }_{1}}| \right)=1$ and $\left( |{{\alpha }_{1}}|,|{{\beta }_{0}}| \right)=1$, then there
exists generating set of 2 elements for the wreath-cyclic group $$G=(\underset{j=0}{\overset{n}{\mathop{\wr
}}}{{C}_{{{i}_{j}}}})\times (\underset{l=0}{\overset{m}{\mathop{\wr }}}{{C}_{{{k}_{l}}}}),$$ where ${{i}_{j}}$ are orders
of ${{C}_{{{i}_{j}}}}$.
\end{theorem}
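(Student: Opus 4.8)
The plan is to reduce everything to the preceding theorem, which already guarantees that each factor is $2$-generated: $\underset{j=0}{\overset{n}{\mathop{\wr }}}C_{i_j}=\langle\beta_0,\beta_1\rangle$ and $\underset{l=0}{\overset{m}{\mathop{\wr }}}C_{k_l}=\langle\alpha_0,\alpha_1\rangle$. Since both factors are finite wreath products of finite cyclic groups, every generator has finite order, so the coprimality hypotheses are assertions about finite integers. First I would fix the pairing of generators dictated by whichever of the two hypotheses holds. In the first case $(|\alpha_0|,|\beta_0|)=1$, $(|\alpha_1|,|\beta_1|)=1$ I set $u=(\beta_0,\alpha_0)$ and $v=(\beta_1,\alpha_1)$; in the second case $(|\alpha_0|,|\beta_1|)=1$, $(|\alpha_1|,|\beta_0|)=1$ I set $u=(\beta_1,\alpha_0)$ and $v=(\beta_0,\alpha_1)$. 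In either case each of $u,v$ is a pair whose two coordinate-orders are coprime, which is exactly the situation a Chinese-Remainder argument can split.

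The key step is to extract the individual coordinate generators from a single diagonal element. Writing $u=(x,y)$ with $p=|x|$, $q=|y|$ and $(p,q)=1$, I would choose integers $s,t$ with $qs\equiv 1\pmod p$ and $pt\equiv 1\pmod q$, which exist by Bézout. Then $u^{qs}=(x^{qs},y^{qs})=(x,e)$, because $y^q=e$ forces $y^{qs}=e$ while $qs\equiv 1\pmod p$ gives $x^{qs}=x$; symmetrically $u^{pt}=(e,y)$. Thus from $u$ alone one recovers both $(x,e)$ and $(e,y)$ inside $\langle u\rangle$, and the same reasoning applied to $v$ recovers its two coordinate projections. Carrying this out for both $u$ and $v$ places all four elements $(\beta_0,e),(\beta_1,e),(e,\alpha_0),(e,\alpha_1)$ in $\langle u,v\rangle$.

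To finish I would observe that $\langle(\beta_0,e),(\beta_1,e)\rangle=\langle\beta_0,\beta_1\rangle\times\{e\}$ equals the first factor by the preceding theorem, and likewise $\langle(e,\alpha_0),(e,\alpha_1)\rangle=\{e\}\times\langle\alpha_0,\alpha_1\rangle$ equals the second factor; their join is the whole direct product. Hence $\langle u,v\rangle=G$ and $G$ is $2$-generated. The argument in the second case is identical after relabelling the pairing, so the two hypotheses are genuinely interchangeable.

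I do not anticipate a serious obstacle, since the whole content is the coprimality-driven splitting. The only points to be careful about are (i) verifying that each coordinate of $u$ and $v$ really has order $|\beta_i|$ or $|\alpha_j|$, which is immediate because a coordinate of a direct-product element has the same order as that element in its own factor, and (ii) keeping the bookkeeping of the two cases straight. The mild subtlety worth stating explicitly is why $x^{q}$ still generates $\langle x\rangle$ when $(|x|,q)=1$: this is the elementary fact that raising a generator of a finite cyclic group to a power coprime to its order yields another generator.
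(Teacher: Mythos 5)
Your proposal is correct and follows essentially the same route as the paper: both pair the generators into vectors $(\beta_0,\alpha_0)$ and $(\beta_1,\alpha_1)$ (crossed appropriately in the second case) and then exploit the coprimality of the coordinate orders by taking suitable powers to isolate each coordinate and recover generating sets of the two factors. Your Bézout/CRT formulation just makes explicit and rigorous the powering argument that the paper carries out via the $lcm_k$ exponents and the canonical tableau generators of Theorem~1.
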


\begin{proof}
The generators ${{\alpha }_{1}}$ and ${{\beta }_{1}}$  are directed automorphisms, ${{\alpha }_{0}},\,\,{{\beta }_{0}}$
are rooted automorphisms \cite{Grig}.
The structure of tableaux are described above in Theorem 1.
In case $\left( |{{\alpha }_{0}}|,|{{\beta }_{0}}| \right)=1$ are mutually coprime and $\left( |{{\alpha }_{1}}|,|{{\beta
}_{1}}| \right)=1$ are mutually coprime, then we group  generator  ${{\alpha }_{0}}$  and  ${{\beta }_{0}}$ in  vector that
is first generator of direct product $(\underset{j=0}{\overset{n}{\mathop{\wr }}}{{C}_{{{i}_{j}}}})\times
(\underset{l=0}{\overset{m}{\mathop{\wr }}}{{C}_{{{k}_{l}}}})$.
Therefore, the first generator of $G$ has form $({{\alpha }_{0}},\,\,{{\beta }_{0}})$ and the second generator has form of
vector $({{\beta }_{1}},\,\,{{\alpha }_{1}}\,)$.
The generator ${{\alpha }_{1}}$ has a similar structure.

In order to express the generator $\sigma_2$ of the canonical set (\ref{can}) from $
\langle {{\alpha }_{0}}, {{\beta }_{1}} \rangle$ we change the exponent from ${\beta }_{1}$ to $lcm_2$. Analogously, we
obtain $\sigma_k = {{\beta }_{1}}^{lcm_k}$ which concludes the proof.
\end{proof}


\subsection{Generators of commutator subgroup and center of a wreath product of group with a non-faithful action }
Let us find upper bound of generators number for $G'$.
Let $ \mathcal{A}$ be a group and $\mathcal{B}$ a permutation group, i.e. a group $\mathcal{A}$ acting upon a set $X$,
where the active group $\mathcal{ A}$ can act not faithfully. Consider the set of all pairs $\{(a, f),  f: X \rightarrow h,
a\in \mathcal{A} \}$.	
We define a product on this set as
$$\{(a_1, f_1) (a_2, f_2):= (a_1 a_2,  f_1 f_2 ^{a_1})\},$$
where $f_1^{a_2}(x)= f_1({a_2}(x))$.

\vspace{5.0mm}

\begin{theorem}
If $W=(\mathcal{A},X)\wr (\mathcal{B},Y)$, where $\left| X \right|=n,\,\,\left| Y \right|=m$ and active group
$\mathcal{A}$ acts on $X$ transitively, then $$d\left( G' \right)\le (n-1)d(\mathcal{B})+d(\mathcal{B}')
+d(\mathcal{A}').$$
\end{theorem}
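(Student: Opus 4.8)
The plan is to filter $W'$ through its intersection with the base group and reduce the statement to two independent generation counts. Write $B = \mathcal{B}^{X} \cong \mathcal{B}^{n}$ for the base group of $W$ and let $\pi_{\mathcal{A}} : W \to \mathcal{A}$ be the canonical projection onto the top group. Since $\pi_{\mathcal{A}}$ is surjective it carries $W'$ onto $\mathcal{A}'$, so there is a short exact sequence $1 \to (W' \cap B) \to W' \to \mathcal{A}' \to 1$. For any group $G$ with a normal subgroup $N$ and quotient $Q$ one has $d(G) \le d(N) + d(Q)$ (lift a minimal generating set of $Q$ and adjoin one of $N$); applying this gives $d(W') \le d(W' \cap B) + d(\mathcal{A}')$. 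It therefore remains only to prove the sharper estimate $d(W' \cap B) \le (n-1)\,d(\mathcal{B}) + d(\mathcal{B}')$.

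Next I would identify $W' \cap B$ explicitly. Because $\mathcal{A}$ acts transitively on $X$, the map $\psi : B \to \mathcal{B}^{\mathrm{ab}}$ sending $f \mapsto \prod_{x \in X} \overline{f(x)}$ (the bar denoting reduction modulo $\mathcal{B}'$) is a well-defined homomorphism that is invariant under the coordinate permutations coming from $\mathcal{A}$; hence $(a,f) \mapsto (\overline{a}, \psi(f))$ extends to a homomorphism $\Phi : W \to \mathcal{A}^{\mathrm{ab}} \times \mathcal{B}^{\mathrm{ab}}$ onto an abelian group, forcing $W' \subseteq \ker \Phi$ and in particular $W' \cap B \subseteq N := \{\, f \in B : \prod_{x} f(x) \in \mathcal{B}' \,\}$. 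For the generating set of $N$ I would fix a base point $x_0 \in X$ and take, for each $x \ne x_0$ and each of the $d(\mathcal{B})$ generators $b$ of $\mathcal{B}$, the \enquote{difference} element $t_{x,b}$ equal to $b$ at $x$, to $b^{-1}$ at $x_0$, and trivial elsewhere, together with, for each of the $d(\mathcal{B}')$ generators $c$ of $\mathcal{B}'$, the element $d_c$ equal to $c$ at $x_0$ and trivial elsewhere. This is a set of $(n-1)\,d(\mathcal{B}) + d(\mathcal{B}')$ elements, all lying in $N$.

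The heart of the argument, and the step I expect to be most delicate, is the reverse inclusion $N \subseteq W' \cap B$ together with the claim that the displayed elements actually generate $N$. For generation I would project $B \to \mathcal{B}^{X \setminus \{x_0\}}$ forgetting the $x_0$-coordinate; the images of the $t_{x,b}$ already generate this quotient, so any $f \in N$ agrees with a product of $t_{x,b}$'s away from $x_0$, and the leftover factor is then supported at $x_0$ with value in $\mathcal{B}'$, hence a product of the $d_c$. To place these generators inside $W'$ I would realise each $t_{x,b}$ as a commutator: choosing $a \in \mathcal{A}$ with $a(x) = x_0$ (possible precisely because the action is transitive) and letting $\beta \in B$ be equal to $b$ at $x_0$ and trivial elsewhere, the multiplication rule gives $[(a,1),\beta] = t_{x,b}$, while each $d_c$ is a product of commutators of coordinate copies of $\mathcal{B}$ inside $B$. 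Transitivity is exactly what makes every difference element a single commutator, and is the reason the base contributes only the $n-1$ independent coordinate shifts rather than one term per orbit; this is the crux on which the coefficient $n-1$ depends. Combining $N = W' \cap B$ with the two bounds above yields $d(W') \le (n-1)\,d(\mathcal{B}) + d(\mathcal{B}') + d(\mathcal{A}')$, as required.
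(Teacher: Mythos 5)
Your proof is correct, and at its core it rests on the same two pillars as the paper's argument: the identification of the base part of $W'$ with the subgroup $N=\{\,f\in\mathcal{B}^{X} : \prod_{x}f(x)\in\mathcal{B}'\,\}$, and a generating set made of $(n-1)d(\mathcal{B})$ \enquote{difference} elements, $d(\mathcal{B}')$ elements concentrated at a base point, and generators covering $\mathcal{A}'$ --- the paper's tableaux $t_k=(e;e,\ldots,h_{j_k},\ldots,c_{j_k})$, with a compensating entry in the last coordinate, are exactly your $t_{x,b}$, and its condition (\ref{CommCon}) cuts out exactly your $N$. The difference lies in how these pillars are supported. The paper imports the characterization of $W'$ from Meldrum's Corollary 4.9 and then asserts the count, leaving the generation claim and the handling of the top factor essentially unargued (its closing tally even garbles the terms, writing $d(\mathcal{A})$ where the statement requires $d(\mathcal{A}')$). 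You instead prove everything from scratch: the inclusion $W'\cap B\subseteq N$ via the abelianization homomorphism $\Phi: W\to \mathcal{A}^{\mathrm{ab}}\times\mathcal{B}^{\mathrm{ab}}$; the reverse inclusion by exhibiting each $t_{x,b}$ as the single commutator $\left[(a,1),\beta\right]$ with $a(x)=x_0$, which I checked against the paper's multiplication rule and which is precisely where transitivity enters (and which explains why an intransitive action would instead force one constrained coordinate per orbit); and the count via the exact sequence $1\to W'\cap B\to W'\to \mathcal{A}'\to 1$ together with a projection argument showing your elements really do generate $N$. What your version buys is a complete, self-contained proof that pinpoints the exact role of transitivity; what the paper's version buys is brevity, by leaning on Meldrum's general machinery for the key structural fact.
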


\begin{proof}
The generators of $W'$ in form of tableaux \cite{Agn}: ${{a'}_{i}}=({{a}_{i}};e,e,e, \ldots, e)$,
${{t}_{1}}=(e;{{h}_{{{j}_{1}}}},e,e, \ldots,{{c}_{{{j}_{1}}}}), \ldots , \,\,{{t}_{k}}=(e;e,e,e, \ldots,
{{h}_{{{j}_{k}}}},e, \ldots, {{c}_{{{j}_{k}}}}),$ ${{t}_{{{l}}}}=(e;e,e,e, \ldots, {{h}_{{{j}_{l}}}}, {{c}_{{{j}_{l}}}})$,
where ${{h}_{j}}, c_{{j}_{l}} \in {{S}_{B}}$, $\mathcal{B}=\left\langle {{S}_{B}} \right\rangle $, ${{a}_{i}}\in
{{S}_{A}}$, $A=\left\langle {{S}_{A}} \right\rangle $.
	Note that, on a each coordinate of tableau, that presents a commutator of  $[a;{{h}_{\text{1}}},\ldots,{{h}_{n}}]$ and
$[b;{{g}_{1}},\ldots,{{g}_{n}}]$, $a, \,b \in \mathcal{A}, h_i, g_j\in \mathcal{B}$ can be product of form
${{a}_{1}}{{a}_{2}}a_{1}^{-1}a_{2}^{-1}\in \mathcal{A}'$ and
${{h}_{i}}{{g}_{a(i)}}h_{ab(i)}^{-1}g_{ab{{a}^{-1}}(i)}^{-1}\in \mathcal{B}$, according to Corollary 4.9  \cite{Meld}.
This products should satisfy the following condition:

	\begin{equation} \label{CommCon}
		\prod\limits_{i\in {{X}}}^{n}{{{h}_{i}}{{g}_{a(i)}}h_{ab(i)}^{-1}g_{ab{{a}^{-1}}(i)}^{-1}\in \mathcal{B}'}.
	\end{equation}
		That is to say that the product of coordinates of wreath product base is an element of commutator $\mathcal{B}'$.
	As it was described above it is subdirect product of $\underbrace{\mathcal{B}\times \mathcal{B}\times \cdots \times
\mathcal{B}}_{n}$ with the additional condition (\ref{CommCon}).
	This is the case because not all element of the subdirect product are independent because the elements must be chosen
in such a way that (\ref{CommCon}) holds. We may rearrange the factors in the product in the following way:

\[\prod\limits_{i=1}^{n}{{{h}_{i}}{{g}_{a(i)}}h_{ab(i)}^{-1}g_{ab{{a}^{-1}}(i)}^{-1}}=(\prod\limits_{i=1}^{n}{{{h}_{i}}{{g}_{{{i}_{{}}}}}h_{{{i}_{{}}}}^{-1}g_{{{i}_{{}}}}^{-1}})[g,h]\in
\mathcal{B}'.\]
where $[g,h]$ is a commutator in case $cw(B)=1$.  We express this element from $\mathcal{B}'$  as commutator $[g,h]$ if
$cw(B)=1$. In the general case, we would have $\prod\limits_{j=1}^{cw(B)}{[{{g}_{j}},{{h}_{j}}]}$ instead of this element.
This commutator are formed as product of commutators of rearranged elements of
$\prod\limits_{i=1}^{n}{{{h}_{i}}{{g}_{a(i)}}h_{ab(i)}^{-1}g_{ab{{a}^{-1}}(i)}^{-1}}$.
  Therefore, we have a subdirect product of $n$ the copies of the group $B$ which has been equipped by condition
  (\ref{CommCon}).
  The multiplier $\prod\limits_{j=1}^{cw(B)}{[{{g}_{j}},{{h}_{j}}]}$ from $\mathcal{B}'$, which has at least
  $d(\mathcal{B}')$ generators
		
\begin{equation}\label{Cent0}
\prod\limits_{i=1}^{n}{{{h}_{i}}{{g}_{a(i)}}h_{ab(i)}^{-1}g_{ab{{a}^{-1}}(i)}^{-1}}=(\prod\limits_{i=1}^{n}{{{h}_{i}}{{g}_{{{i}_{{}}}}}h_{{{i}_{{}}}}^{-1}g_{{{i}_{{}}}}^{-1}})\prod\limits_{j=1}^{cw(B)}{[{{g}_{j}},{{h}_{j}}]}\in
\mathcal{B}'.
\end{equation}

	Since $(\prod\limits_{i=1}^{n}{{{h}_{i}}{{g}_{{{i}_{{}}}}}h_{{{i}_{{}}}}^{-1}g_{{{i}_{{}}}}^{-1}})=e$ and  the product
$\prod\limits_{j=1}^{cw(B)}{[{{g}_{j}},{{h}_{j}}]}$ belongs to $\mathcal{B}'$,  then condition (\ref{CommCon}) holds.  The
assertion of a theorem on a recursive principle is easily generalized on multiple wreath product of groups.
	
	Thus minimal total amount consists of at least $d\left( \mathcal{B}' \right)$ generators for $n-1$ factors of group
$\mathcal{B}$, $d\left( \mathcal{B}' \right)$ generators for the dependent factor from $\mathcal{B}'$ and $d(\mathcal{A})$
generators of the group $\mathcal{A}$ which concludes the proof.

It should be noted that not all the elements of commutator subgroup, that has structure of the subdirect product, are
independent by (\ref{CommCon}), at least one of them must be chosen carefully such that would be (\ref{CommCon}) satisfied.
This implies the estimation $d\left( G' \right)\le (n-1)d(B)+d(B')$.

	Thus minimal total amount consists of at least $d\left( \mathcal{B}' \right)$ generators for $n-1$ factors of group
$\mathcal{B}$, $d\left( \mathcal{B}' \right)$ generators for the dependent factor from $\mathcal{B}'$ and $d(\mathcal{A})$
generators of the group $\mathcal{A}$ which concludes the proof.
\end{proof}

\vspace{5.0mm}

We shall consider special case when a passive group $(\mathcal{B},Y)$ of $W$ is a perfect group.
Since we obtain a direct product of $n-1$ the copies of the group $B$ then according to Corollary 3.2. of Wiegold J.
\cite{W} $d\left( \mathcal{B}^n  \right) \le d(\mathcal{B}) + n-1 $ \cite{W}. More exact upper bound give us Theorem A.
\cite{W}, which use $s$ a the size of the
smallest simple image of $G$.
	 As it were studied by J. Wiegold if among group of product $\prod\nolimits_{}{}_{i=1}^{n}({{G}_{i}},{{X}_{i}})$ are
$n$ same perfect groups   then $d({{({{G}_{0}},{{X}_{0}})}^{n}})<d({{G}_{\text{0}}})+1+{{\log }_{s}}n$, where $s=ord(H)$
where $H$ is minimal prime non abelian group, which is image of  ${{G}_{0}}$  \cite{W}.
Therefore, in this case our upper bound has the form
  $$d\left( W' \right)\le clog_s n + d(\mathcal{B}') + d(\mathcal{A}').$$

Now we consider no regular wreath product, where active group can be both as infinite as finite and consider a centre of
such group. This is generalization of Theorem 4.2 from the book \cite{Meld} because action of $\mathcal{A}$ is not non
faithfully.
Let $X=\{ x_1, x_2, \ldots ,x_n \} $ be $\mathcal{A}$-space.
If an non faithfully action by conjugation determines a shift of copies of $\mathcal{B}$ from direct product
$\mathcal{B}^n$ then we have not standard wreath product $(\mathcal{A}, X) \wr \mathcal{B}$ that is semidirect product of
$\mathcal{A}$ and $\prod \limits_{x_i \in X} \mathbb{B} $ that is $\mathcal{A}{{\ltimes }}_{\varphi } {{\left( \mathcal{B}
\right)}^{n}}$ and the following Proposition holds. Let $\mathcal{K} = ker( \mathcal{A} , X)  $ that is subgroup of
$\mathcal{A}$ that acts on $X$ as a pointwise stabiliser, that is kernel of action of $\mathcal{A}$ on $X$.

Denote the set of all the orbits of  $\mathcal{A}$ on $X$ by $\mathcal{O}$, if this set is finite then by $\mathcal{O}_f$.  Recall that the direct product indexed by infinite set consists of all infinite sequences, and the direct sum consists only of sequences with finitely many elements distinct from zero.

Denote by $Z(\tilde{\bigtriangleup}(\mathcal{B}))$ the subgroup of diagonal subgroup \cite{Dix} $Fun (X, Z(B))$ of
functions $f : X \rightarrow Z(B) $ which are constant on each orbit of action of $A$ on $X$ for unrestricted wreath
product, and denote by $Z(\bigtriangleup (\mathcal{B}^n))$ the subgroup of diagonal  $Fun(X, Z(\mathcal{B}^n))$ of
functions with the same property for restricted wreath product, where $n$ is number of non-trivial coordinates in base of
wreath product.

\begin{prop}\label{Z}
A centre of the group $ (\mathcal{A}, X) \wr \mathcal{B}$ is direct product of normal closure of centre of a diagonal of $Z
(\mathcal{B}^{n})$ i.e. ($E \times Z( \bigtriangleup( \mathcal{B}^{n}))$), trivial an element, and intersection of  $(\mathcal{K}) \times E$ with $Z(\mathcal{A})$. In other words,

\begin{equation}\label{Cent}
Z((\mathcal{A}, X) \wr \mathcal{B})= \langle (1 ; \,\, \underbrace{h,h,\ldots,h}_{n}),\,\,e,\,\, Z(\mathcal{K},X) \wr
\mathcal{E} \rangle \simeq  (Z(\mathcal{A}) \cap \mathcal{K}) \times  Z(\bigtriangleup( \mathcal{B}^{n})),
\end{equation}

 where $h \in
Z(\mathcal{B})$, $|X|=n$. 

For restricted wreath product with $n$ non-trivial coordinate:
$Z((\mathcal{A}, X) \wr \mathcal{B})= $ \\ $ \langle (1 ; \,\, \ldots,h,h,\ldots,h,\ldots),\,\,e,\,\, Z(\mathcal{K},X) \wr
\mathcal{E} \rangle \simeq  (Z(\mathcal{A}) \cap \mathcal{K}) \times  Z(\bigtriangleup(\mathcal{B}^{n})) \simeq {\underset{j \in O_f}{\bigoplus } (Z(\mathcal{A}) \cap \mathcal{K}) \times Z(\mathcal{B})}.$

In case of unrestricted wreath product we have:
$ Z((\mathcal{A}, X) \wr \mathcal{B}) = $ \\ $ \langle (1 ; \,\, \ldots,h_{-1},h_0, h_1,\ldots,h_i,h_{i+1},\ldots,
),\,\,e,\,\,
 Z(\mathcal{K},X) \wr \mathcal{E} \rangle \simeq  (Z(\mathcal{A}) \cap \mathcal{K}) \times
 Z(\tilde{\bigtriangleup}(\mathcal{B}))= {\underset{j \in O}{\prod } (Z(\mathcal{A}) \cap \mathcal{K}) \times Z(\mathcal{B})}.$
\end{prop}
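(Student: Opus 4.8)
The plan is to compute the center directly from the group law. I would write an arbitrary element as a pair $(a,f)$ with $a\in\mathcal{A}$ and $f:X\to\mathcal{B}$, and impose that it commute with every $(b,g)$. Using the rule $(a,f)(b,g)=(ab,\,f\,g^{a})$ with $g^{a}(x)=g(a(x))$, the equation $(a,f)(b,g)=(b,g)(a,f)$ separates into the top-group condition $ab=ba$ for all $b\in\mathcal{A}$, which forces $a\in Z(\mathcal{A})$, together with the pointwise functional identity
$$f(x)\,g(a(x)) = g(x)\,f(b(x)) \quad (x\in X),$$
required for all $b\in\mathcal{A}$ and all base functions $g:X\to\mathcal{B}$. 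Everything reduces to solving this identity.

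First I would extract the constraints by specializing $b$ and $g$. Taking $g\equiv e$ gives $f(x)=f(b(x))$ for every $b$, so $f$ is constant on each $\mathcal{A}$-orbit. Taking $b=e$ gives $f(x)\,g(a(x))=g(x)\,f(x)$; evaluating at a point with $a(x)=x$ shows that $f(x)$ commutes with every value $g(x)$, hence $f(x)\in Z(\mathcal{B})$. The decisive new step, which distinguishes this non-faithful setting from Meldrum's faithful case, is to exclude $a(x)\neq x$: if $a$ moved some $x$, I would pick $g$ with $g(x)=e$ and $g(a(x))=\beta$ arbitrary (a finitely supported choice, hence legitimate in both the restricted and unrestricted base), whereupon the identity collapses to $f(x)\,\beta=f(x)$, forcing $\beta=e$, a contradiction. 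Thus $a$ fixes $X$ pointwise, i.e. $a\in\mathcal{K}=\ker(\mathcal{A},X)$, and so $a\in Z(\mathcal{A})\cap\mathcal{K}$.

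Next I would check that these conditions are also sufficient. If $a\in Z(\mathcal{A})\cap\mathcal{K}$ and $f$ is a $Z(\mathcal{B})$-valued function constant on $\mathcal{A}$-orbits, then $g^{a}=g$ because $a$ acts trivially, $f(b(x))=f(x)$ because $b(x)$ lies in the orbit of $x$, and $f(x)$ commutes past $g(x)$; the functional identity holds and $ab=ba$, so $(a,f)$ is central. To obtain the stated isomorphism I would observe that the central pairs $(a,e)$ with $a\in Z(\mathcal{A})\cap\mathcal{K}$ and the central base elements $(e,f)$ form two subgroups that commute (since $a\in\mathcal{K}$ makes the twisting trivial), meet only in the identity, and together fill the whole center; this exhibits $Z((\mathcal{A},X)\wr\mathcal{B})$ as the internal direct product $(Z(\mathcal{A})\cap\mathcal{K})\times Z(\bigtriangleup(\mathcal{B}^{n}))$, the second factor being exactly the orbit-constant $Z(\mathcal{B})$-valued functions named before the statement, and the two factors being precisely the generators displayed in the claim.

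The three displayed formulas differ only in the index set and in whether $f$ is required to have finite support: for $|X|=n$ and for the restricted base one lands in $Z(\bigtriangleup(\mathcal{B}^{n}))$, while for the unrestricted base arbitrary orbit-constant functions are allowed and one lands in $Z(\tilde{\bigtriangleup}(\mathcal{B}))$; the derivation of the constraints above is insensitive to this distinction, so all three cases follow from the single argument. I expect the main obstacle to be the point-moving step establishing $a\in\mathcal{K}$: one must ensure that prescribing $g$ independently at $x$ and at $a(x)$ genuinely uses $x\neq a(x)$, verify that the test function employed lies in the relevant (restricted or unrestricted) base, and confirm that the orbit-constancy of $f$ introduces no hidden cancellation that would weaken the contradiction.
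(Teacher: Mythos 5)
Your proof is correct and follows essentially the same route as the paper: both compute the center directly from the group law, arriving at the conditions that the base function be constant on the $\mathcal{A}$-orbits with values in $Z(\mathcal{B})$ and that the top component lie in $Z(\mathcal{A})\cap\mathcal{K}$, then verifying sufficiency and assembling the internal direct product. The difference is one of rigor rather than route: your test-function step forcing the top component into the kernel $\mathcal{K}$ (which implicitly requires $\mathcal{B}\neq E$, an assumption the proposition itself needs) is worked out explicitly, whereas the paper merely asserts the necessity of these conditions and only checks that the listed elements are indeed central.
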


\begin{proof} The elements of center subgroup have to satisfy the condition:   $f : X \rightarrow B$ such is constant on
each orbit $ \mathcal{O}_j$ of action $\mathcal{A}$ on $X$ i.e. $f(x)=b_i$ for any $x\in \mathcal{O}_j$. Also every $b_x$:
$b_x\in Z(\mathcal{B})$.
Indeed the elements of form $(1  ; \,\, \underbrace{h,h,\ldots,h}_{n})$ will not be changed by action of conjugation of any
element from $\mathcal{A}$ because any permutation elements coordinate of diagonal of $\mathcal{B}^{n}$ does not change it.
Also $h$ commutes with any element of base of $ (\mathcal{A}, X) \wr \mathcal{B}$ because $h$ from centre of
$\mathcal{B}$.
Since the action is defined by shift on finite set $X$, $|X|= n$ is not faithfully, then its kernel $ \mathcal{K} \neq E$
which confirms the proposition.
Also elements of subgroup $(\mathcal{A}, X) \wr \mathcal{E})$ belongs to $Z((\mathcal{A}, X) \wr \mathcal{B})$ iff it acts
trivial on $X$.

If we have unrestricted wreath product then we show that the center $Z(A,X \wr B)$ is the subgroup of elements of form $(a, b)$ satisfying the following conditions:
\begin{enumerate}
  \item 	$f$  is constant on each orbit of $\mathcal{A}$ on $X$, i.e. $f(x) = c_j$  for any $x \in O_j$, where $O_j \in O$, and  $c_j \in Z(\mathcal{B})$.
  \item $a \in  Z\left( \mathcal{A} \right)\cap \mathcal{K}$.
  \end{enumerate}
By the same reasons as for finite set of non-trivial elements on  $X$ applied  to $O_j$ as above It is obvious that if an element $(a,b )$ of $(A,X) \wr B$ satisfies the conditions 1) and 2), then the element $(a,b )$ belongs to the center  $Z((A,X) \ wr B)$.

On the contrary, if the element  $\left( g,\text{ }h \right)\in (A,X)\wr B$  does not satisfy one of the conditions 1) or 2) then there exists  $a\in A$ that  $ag\ne ga$  or $g$ acts non trivial on  $x_i \in X$ therefore  $\left( g,e \right)\left( a,b \right)\ne \left( a,b \right)\left( g,e \right)$, where $b$ is on  $x_i $ coordinate.  In case of violation of condition 1) there will be no commutation on the second coordinate.
\end{proof}

\vspace{5.0mm}

\begin{example}
If $\mathcal{A}= \mathbb{Z}$ then a centre $Z(( \mathbb{Z}, X)  \wr  \mathcal{B}  )= \langle ( 1 ; \,\, \underbrace{h,h,
\ldots, h}_{n}),\,\,e,\,\, n \mathbb{Z} {\ltimes} \mathcal{E} :  h \in Z( \bigtriangleup( \mathcal{B}^{n})) \rangle$.
Since the action defined by shift on finite set $X$ is not faithfully, and its kernel is isomorphic to $ n \mathbb{Z}$
because cyclic shift on $n$ coordinates is invariant on $X$.

    Generating set for commutator subgroup
	$\left( {{\mathbb{Z}}_{n}}\wr {{\mathbb{Z}}_{m}} \right)'$, where ${{\mathbb{Z}}_{n}},\,\,{{\mathbb{Z}}_{m}}$ are
presentation in additive form, is the following:
	\begin{align*}
		& {{h}_{1}}=\left( 0; 1, 0, \ldots, m-1 \right),
		\\
		&
		{{h}_{2}}=\left( 0;0,1,0, \ldots, m-1 \right),
		\\
		&
		\, \,  \,  \, \, \,  \,  \,  \, \,  \,  \, \, \,  \,  \,  \, \,  \,  \,  \, \,  \,  \,     \,  \vdots
		\\
		&
		{{h}_{n-1}}=\left( 0;0, \ldots, 1,m-1 \right).\\
	\end{align*}
	Thus, it consist of $n$ tableaux of form ${{h}_{i}}=\left({h}_{i1}, \ldots,  {h}_{im} \right)$ and relations for
coordinate of any tableau $h_i, i \in \{1, \ldots, n-1 \}$  is
$${{h}_{i1}}+ \cdots +{{h}_{in-1}}\equiv 0(\text{mod } m).$$
According to Theorem 3, for wreath product of abelian groups presented in multiplicative form, this relation has the form
$$\prod\limits_{i=1}^{n}{{{h}_{i}}{{f}_{{{i}_{{{\pi }_{a}}}}}}h_{{{i}_{{{\pi }_{a}}{{\pi }_{b}}}}}^{-1}f_{{{i}_{{{\pi
}_{a}}{{\pi }_{b}}\pi _{a}^{-1}}}}^{-1}}\left[ h,f \right]=\prod\limits_{i=1}^{n}{({{h}_{i}}{{f}_{{{i}_{{{\pi
}_{a}}}}}}h_{{{i}_{{{\pi }_{a}}{{\pi }_{b}}}}}^{-1}f_{{{i}_{{{\pi }_{a}}{{\pi }_{b}}\pi
_{a}^{-1}}}}^{-1}}\prod\limits_{j=i+1}^{i+2}{\left[ {{h}_{j}},{{f}_{{{j}_{{{\pi }_{a}}}}}} \right])}=e.$$
\end{example}

Let $(\mathbb{Z},\,{{X}_{n}})$ be infinite cyclic group acting on $n$-letters alphabet. We denote $m$-elements set by ${{X}_{m}}$.

\begin{example}
If $G\simeq (\mathbb{Z},\,{{X}_{n}})\,\wr (\mathbb{Z},\,{{X}_{m}})\wr \mathbb{Z}$ then $Z(G)\simeq n\mathbb{Z}\times m\mathbb{Z}\times \mathbb{Z}$.
\end{example}

\begin{proof}
Let us prove it applying Proposition \ref{Z}. We note that in case of action on finite set there is not difference between restricted and unrestricted wreath products. Our formula (\ref{Cent}) of centre of two groups $A$ acting on $X$ and $B$ wreath product will be used by us. This formula states that $Z(\left( A,X \right)\wr B)\simeq \left( Z(A)\cap \ker (A,X) \right)\times Z(B)$, where $\ker (A,X)$ is kernel of acting $A$ on $X$ (in our such action by shifts of $\mathbb{Z}$ on ${{X}_{n}}$ is not faithful, the kernel of action $\mathbb{Z}$ on ${{X}_{n}})$ in subgroup $n\mathbb{Z}$ because of in every $n$ shifts by cyclic the shifting point of $X_n$ returns on its initial place).

 In a case of the new group $G\simeq (\mathbb{Z},\,{{X}_{n}})\,\wr (\mathbb{Z},\,{{X}_{m}})\wr \mathbb{Z}$ we consider two groups again $\mathbb{Z}\wr \,W\,$, where $W=(\mathbb{Z},\,{{X}_{m}})\,\wr (\mathbb{Z},\,X)$. According to our formula in (\ref{Cent}) we obtain the centre $Z(W)=m\mathbb{Z}\times \mathbb{Z}$ therefore it remains to calculate the centre of the first group the kernel of its action. The kernel of action $(\mathbb{Z},\,{{X}_{n}})$ i.e. $\mathbb{Z}\,$ on ${{X}_{n}}$ is the group $n\mathbb{Z}\,$ because of the action of $\mathbb{Z}$ on $n$-elements set ${{X}_{n}}$ has the period $n$.

 Applying the formula (\ref{Cent})
  to $\mathbb{Z}\wr \,W\,$ we conclude that 
$Z\left( (\mathbb{Z},{{X}_{n}}\,)\,\wr \,W \right)\,=n\mathbb{Z}\times m\mathbb{Z}\times \mathbb{Z}$. The quotient $G/ G'$ is $\mathbb{Z} \times \mathbb{Z} \times \mathbb{Z} $ therefore the \textbf{quotient group }of $G/ G'$ by the centre $Z(G)$ is the following $\mathbb{Z}_n \times \mathbb{Z}_m \times E $. Thus, centre $Z(G)$ is subgroup of finite index in $G' = ((\mathbb{Z},\,{{X}_{n}})\,\wr (\mathbb{Z},\,{{X}_{m}})\wr \mathbb{Z}) \simeq \mathbb{Z} \times  \mathbb{Z} \times  \mathbb{Z}$.
\end{proof}
\vspace{5.0mm}
\begin{example}
If $G={{\mathbb{Z}}_{n}} \wr {{\mathbb{Z}}_{m}}$ is standard wreath product, then $d(G')=n-1$.
\end{example}

Let $G=\mathbb{Z} \wr_X \mathbb{Z}$ and $G= A \wr_X B $ be a restricted wreath product, where only $n$ non-trivial elements in coordinates of
base of wreath product which are indexed by elements from $X$, in degenerated case $\mid X \mid = n$. $Z$ acts on $X$ by
left shift. Also $A$ acts transitively from left.

\begin{remark}
The quotient group of a restricted wreath products $G= \mathbb{Z} \wr_X \mathbb{Z}$ by a commutator subgroup is isomorphic to $\mathbb{Z}\times {{\mathbb{Z}}}$. In previous conditions if $G= A \wr_X B $ then, $G/G'= A/A' \times B/B' $. If $G= {\mathbb{Z}}_n \wr {\mathbb{Z}}_m$, where $\left( m,\,\,n \right)=1$, then  $d (G/G')=1$. If $G={\mathbb{Z}} \wr {\mathbb{Z}}$ is an unrestricted regular wreath product then $G/G'\simeq {\mathbb{Z}} \times E \simeq {\mathbb{Z}} $.
\end{remark}
\begin{proof}
Consider the element of $G=A\wr_X \mathcal{B}$, where $A$ can be $\mathbb{Z}$ which acts on $X$ by left shift, then elements of commutator subgroup has form: $[e; \ldots, h_{-n}, \ldots, {{h}_{0}}, h_{1},\ldots ,{{h}_{n}}, \ldots, ]$, where ${{h}_{i}}\in B$. According to Corollary 4.9  \cite{Meld} the commutator of elements $h=[a; h_1,\ldots , h_n]$, $g=[b; g_1, \ldots , g_n]$, $g, h \in G$ satisfies the condition (\ref{CommCon}), which for case where $B$ is abelian such: $\prod\limits_{i=1}^{n}{{{h}_{i}}{{g}_{a(i)}}h_{ab(i)}^{-1}g_{ab{{a}^{-1}}(i)}^{-1}=e},$
where $g_i, \, h_i$ are non trivial coordinates from base of group,
 $a, \,b\in A$, ${{g}_{i}},{{h}_{j}}\in B$. The commutator with the shifted coordinate ${{h}_{i}}{{g}_{a(i)}}h_{ab(i)}^{-1}g_{ab{{a}^{-1}}(i)}^{-1}$ appears within the $i$-th coordinate position due to action of $A$. According to Corollary 4.9  \cite{Meld} the set of elements satisfying condition  (\ref{CommCon}) forms a commutator. Also the equivalent condition can be formulated:

\begin{equation} \label{cond1}
\prod\limits_{i=1}^{n}{{{h}_{i}}{{g}_{i}}h_{i}^{-1}g_{i}^{-1} \in \mathcal{B'}},
\end{equation}
Therefore, if $\mathcal{B}$ is abelian an element $h$ of $G$ belongs to $G'$ iff $h$ satisfy a condition: $\prod\limits_{i=1}^{n}{{h}_{i}}=e$.

 For unrestricted wreath product to show that all base of wreath product is in the commutator subgroup we choose an element $[e; \ldots, h_{-1}, h_0, h_1, \ldots ]$, where $h_i$ is variable, and form a commutator which is an arbitrary element $[e; \ldots, g_{-1}, g_0, g_1, \ldots]$ of wreath product base: $$[e; \ldots, h_{-1}, h_0, h_1, \ldots]
[\sigma;e,e, \ldots ,e][e; \ldots,
h_{-1}^{-1}, h_0^{-1}, h_1^{-1}, \ldots ][\sigma^{-1};e,e, \ldots ,e] = [e;
\ldots , g_{-1}, g_0, g_1, \ldots].$$
For convenience we present active group in the additive form. Then to previous equality holds the following equations have to be satisfied: $h_0-h_1=g_0, h_1-h_2=0, h_2-h_3=0, ....$ it implies that $h_1=h_0-1$, $h_2=h_1$, $h_3=h_2$, ... $h_i+1=h_i$. Therefore $h_i=0, \, i\geq 1$. From other side we have $h_{-1}-h_0=g_0, h_{-1}-h_{-2}=0, h_{-2}-h_{-3}=0, .... ,$ so $h_{-i}=g_0,$ for all $i<0$. That is impossible in the restricted case but possible in the unrestricted. As a corollary $G/G'\simeq Z\times Z $ for restricted case.
Thus, for unrestricted case all base of $G$ is in $G'$ as a corollary $G/G' \simeq Z\times E $.

 Thus, this group is a subdirect product of $\underbrace{B \times B \times \cdots \times B}_{n}$ with the additional condition (\ref{cond1}) where, because for any element of the subgroup of coordinates there exists a surjective homomorphism acting upon $B$, we can conclude that $G'$ must be a subdirect product.
 The commutator subgroup is the kernel of homomorphism $ \varphi: G \twoheadrightarrow G / G'$. More precisely, $G=(Z,X) \wr (Z,Y) \twoheadrightarrow G / G'  \simeq   Z/ Z'  \times Z / Z'= {{\mathbb{Z}}}\times {\mathbb{Z}}$. In case $G = A\wr \mathcal{B}$ the $ker \varphi$ has the same structure, the homomorphism $ \varphi$ maps those elements of $B^n$, as base of $G$, which satisfy $\prod\limits_{i=1}^{n}{{{h}_{i}}=e}$, i.e. the elements of $B'$ in $e$ of the group $G/G'$. Thus, $ker \varphi= G'$. To show that the properties of injectivity and surjectivity hold for this homomorphism, we chose the elements from $G$ which have the form $[e; e,\ldots e,h,e,\ldots ,e]$ that can be generator in canonical form of generating set of wreath product (\ref{can}), where $h \notin G'$, corresponding to a a specimen from the quotient group $B/B'$. Also we chose independently, an element of the form $[a; e,\ldots ,e,\ldots ,e]$ corresponding to a specimen of the quotient group $A/A'$. Therefore, we must have a one-to-one correspondence between $G/G'$ and  $A/A' \times B/B' $. In this case, we obtain ${}^{G}/{}_{G'}\simeq \left[ {}^{A}/{}_{A'}\times {}^{B}/{}_{B'} \right]$.	The basic property of homomorphism for generators in canonical form (\ref{can}) is obviously accomplished.

In the scenario when the action of $Z$ upon the $n$ elements from the set is isomorphic to the action of ${{Z}_{n}}$ elements on the set or the action of the ${{Z}_{n}}$ elements on itself.
 In case $G=Z \wr Z$ we have ${}^{G}/{}_{G'}\simeq \left[ {{\mathbb{Z}}}\times {\mathbb{Z}} \right]$.

For the group $G=Z_{n} \wr Z_{m}$ the same is true with ${}^{G}/{}_{G'}\simeq \left[ {{\mathbb{Z}}_n}\times {\mathbb{Z}_m}
\right]$
and dependently of  fact of $\left( m,n \right)=1$ or not  can admits one or two generators. For the group $G={{Z}_{n}}\wr {{Z}_{m}}$ it should be noted that the same is true. In the general case, $\underset{i=1}{\overset{n}{\mathop{\wr }}}{{\mathbb{Z}}_{{{m}_{i}}}}$ can have only one generator more than the quotient by commutator has.
\end{proof}

\subsection{Application to Geometric Groups of Diffeomorphisms Acting on the \text{M\"{o}bius} Band}
Maksymenko S. \cite{Maks} studied various different geometric objects and considered the actions of diffeomorphisms on
them. We now consider the algebraic structure and the generators for a group of such type.
	
	Let $M$ be a smooth compact connected surface, $P$ be either the real line or the circle, $f:M\to P$ be a smooth map,
	and $O(f)$ be the orbit of $f$ with respect to the right action of the group $\mathrm{Diff}(M) (\mathcal{D}(M)$) of
diffeomorphisms of $M$.
	We assume that at each critical point, the map $f$ is equivalent to a homogeneous polynomial in two variables without
multiple factors.
	Conversely, it should be noted that every group obtained in the way described will be isomorphic to $G(f)$ for some
smooth map $f:M \to P$.
	
	We will now specify the object and the construction of orbits under the action of the group diffeomorphisms. Let
$f:M\to \mathbb{R}$ be a Morse function on a connected compact surface $M$. Let $\mathcal{S}(f)$ and $O_f = \mathcal{O}(f)$
be the stabiliser and the orbit of $f$ with respect to the right action of the group of diffeomorphisms $\mathcal{D}(M)$
respectively.

Let $X_f$ denote the partition of $M$ whose elements are the connected components of  level-sets $f^{-1}(c)$ of $f$. It
should be noted here that an element $g \in X_f $ is called critical if it contains a critical point of  $f$, otherwise,
the elements is called regular. It is well known within this research domain that the factor space $M/{X_f} $, has a
natural structure of a finite graph and is entitled the Kronrod-Reeb graph.

	In our case, the diffeomorphisms act upon the \text{M\"{o}bius} band.
	Let $M$ now be a compact not orientable surface and $\omega $ be a volume from $M$ which has $h$-form of the
\text{M\"{o}bius} band. For a smooth map $f:M\to R$, denote by $S(f)$, the subgroup $\mathcal{D}(M)$ of diffeomorphisms $h$
(of $M$) which preserve $f$, i.e. those satisfying the relation  $f\circ h=f$.
	
This group is associated with ${{S}_{id}}(f)$, which is a subgroup of stabiliser elements isotopic to the identity, i.e.
${{\pi }_{1}}({{O}_{f}},f)\simeq {{\pi }_{0}}{{S}_{id}}(f)$, where the last isomorphism arises due to locally trivial
bundle.
Because there are a locally trivial bundle of homotopical groups with base ${{\pi }_{1}}\left( {{O}_{f}},\,\,f \right)$ and
layer ${{\pi }_{0}}{{S}_{id}}\left( f \right)$, this means that an exact sequence of homotopic groups and locally trivial
bundle of homotopical groups give an explanation of the isomorphism ${{\pi }_{1}}\left( {{O}_{f}},f \right)\simeq {{\pi
}_{0}}{{S}_{id}}\left( f \right)$. This locally trivial bundle of homotopical groups induce an exact sequence of homotopic
groups of that bundle. Now since the group of diffeomorphisms is infinitely dimensional, we have found the connected
components. This group is associated with the action of the group $\frac{S(f)}{{{S}_{id}}(f)}$ upon splitting into the
function level lines $f$.

In our case, the Morse function upon $M$ has two local extremes, which are the points of local maximum. Moreover, the Morse
function $f$ must have critical sets with exactly one saddle point. The lines of levels around a local maximum point of $f$
have the form of
coaxial circles, where these lines are determined by the polynomial with form $\pm ({{x}^{2}}+{{y}^{2}})+c$.
  determined by the following  homogeneous polynomial plus constant  ${{x}^{2}}-{{y}^{2}}+c$.

    Being a little more precise, we will now consider the function of Morse $f$ on $M$, which satisfies the following three
    properties:
	\begin{enumerate}
		\item $f$ is constant on the bound $M$;
		\item there are two points of maximum at a saddle point;
		\item at the two points of maximum, the values of the function are equal, i.e. at every critical point of $f$,
the germ of $f$ is $C^{\infty}$ equivalent to some homogeneous polynomial in two real variables without multiple
factors.
	\end{enumerate}
	
    Let $f:M\to R$ now be a ${{C}^{\infty }}$ Morse function.
connected was described by Maksymeko 
    We note here that since the polynomial $\pm ({{x}^{2}}+{{y}^{2}})+c$ is homogeneous and has no multiple factors, it
    follows (from the celebrated Morse Lemma) that the space of all Morse maps belongs to the space of maps $
    \mathcal{F}(\mathcal{M}, \mathcal{P})$, where $f$ here only has isolated critical points and $\mathcal{P}$ is either
    the real line $\mathbb{R}$ or the circle $S^1$.
	
    Let $\mathcal{D}(M)$  be a group of diffeomorphisms which preserve the Morse function $f$ on $M$. We know from the
    results of Maksymenko S. \cite{Maks} that $\pi_0 (D(M))\simeq\mathbb{Z}$.  Let there exist upon $M$, $n$ identical
    regions $X_i$ (critical sets) which have, for example, the form of doubles, meaning that $f$ has two critical points in
    each $X_i$ and additionally, that $X_i$ are the domains of simple connectedness.

    Consider a group $H$ of automorphisms of $M$ which are induced by the action of diffeomorphisms $h$ of a group $D(M)$
    which preserve the \text{M\"{o}bius} function $f$. In other words, the $h$ here are from the stabiliser $ S\left( f
    \right)\triangleleft D(M)$. We note that the generators with stabilisers with the right action by diffeomorphisms
    $\pi_0 S(f|_{X_i,}\partial X_i)$ are $\tau_i $. The generators of the cyclic group $Z$ which define a shift are $\rho
    $. Since the group action is continuous, this implies that the $\rho $ can realize only cyclic shifts, else one would
    change the domains of simplicity $X_i$ order.

Assume there are $n$ critical sets $X_i$ on $M$. The automorphism group $H \simeq {{\pi }_{1}}({{O}_{f}}, f)$ has exactly
two subgroups $\mathbb{Z}$ which correspond to the rotation of $M$, whose critical sets $X_i$ have not changed the order of
$X_i$ and ${{\left( \mathbb{Z} \right)}^{n}}$ denotes the subgroup of automorphisms of $n$ critical sets. Analogously to
previous investigations \cite{ Maks, Shar, SkVC}, there exists a short exact sequence $0\to {{\mathbb{Z}}^{m}}\to {{\pi
}_{1}}({{O}_{f}}, f)\to \mathbb{Z} \to 0$, where the $G$-group of automorphisms are Reeb's (Kronrod-Reeb) graph \cite{Maks}
and hence $O_f(f)$ is an orbit under action of diffeomorphism group.

The application of such an action results in a surjective epimorphism to a group  $\mathbb{Z}$, which has the left inverse
and arises as a result of splitting. The automorphism group therefore has the structure of a semi-direct product ${{\left(
\mathbb{Z} \right)}^{n}}\rtimes \ \mathbb{Z}$. This is in agreement with the work of Maksymenko S. \cite{Maks} who
considers a similar scenario but for a different group and set (surface).
Moreover, we note that this Morse function $f$ has critical sets ${{X}_{i}}$ on \text{M\"{o}bius} band $(M)$ with one
saddle point.
	
The minimal set of generators for the fundamental group ${{\pi }_{1}}({{O}_{f}},f)$ of the orbit of the function $f$ with
respect to the action of the group of diffeomorphisms of non-moving $\partial M$ is found in the next theorem.
	
We note that since the action of the group of diffeomorphisms on $n$-critical sets of $M$ have been determined and
described, the next thing to be considered is how this group has the correspondent to this action structure
	${{\pi }_{1}}({{O}_{f}},f) \simeq  \mathbb{Z}{{\ltimes  }} {{\left( \mathbb{Z} \right)}^{n}}.$
	We will denote by $H$, the fundamental group ${{\pi }_1}({O_f},f)$.
	
The first generator is $\rho $ since it realises the shift of the \text{M\"{o}bius} band. The second, $\tau $, realises the
rotation of domains ${{X}_{i}}$ of simple connectedness upon the \text{M\"{o}bius} band when passing through its twisting
point.
In other words, $\tau $ acts by the automorphism by permutation of sheets of doubles ${{X}_{i}}$ with the winding of outer
adjacency on each double ${{X}_{i}}$. Thus, we conclude that ${{\tau }_{i}}$ has infinite order.

Bounds on these domains are the lines of levels of function $f$ upon these domains, or in other words, the sets of points
with $f=const_1$.
We shall now prove that the action of the first generator $\rho $ of the group defines the homomorphism in $Aut (Z^n)$..


\begin{theorem}
The group $  H \simeq \mathbb{Z}{{\ltimes  }} {{\left( \mathbb{Z} \right)}^{n}} =\left\langle \rho ,\,\,\tau  \right\rangle
$ with defined above homomorphism in $Aut{{Z}^{n}}$ has two generators and non trivial relations
	
	$${{\rho }^{n}}\tau {{\rho }^{-n}}=\tau _{{}}^{-1}, \, \, {{\rho }^{i}}\tau {{\rho }^{-i}} {{\rho }^{j}}\tau {{\rho
}^{-j}}= {{\rho }^{j}}\tau {{\rho }^{-j}}{{\rho }^{i}}\tau {{\rho }^{-i}}, \, \, 0<i,j < n.$$
	
Also this group admits  another  presentation in generators and relations
\begin{equation}\label{relation}
\left\langle \rho ,{{\tau }_{1}}, \ldots, {{\tau }_{n}}\left| \rho {{\tau }_{i\left( \,\text{mod} \text{ } n
\right)}}{{\rho }^{-1}}={{\tau }_{i+1\left( \,\text{mod} \text{ } n \right)}} \right.,\,\,\,{{\tau }_{i}}{{\tau
}_{j}}={{\tau }_{j}}{{\tau }_{i}},\,i,\,\,j\le n \right\rangle.  		
\end{equation}
\end{theorem}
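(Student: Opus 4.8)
The plan is to realise $H$ explicitly as the semidirect product $\mathbb{Z}\ltimes_{\phi}\mathbb{Z}^n$ described in the preceding discussion and then to verify both presentations against this model. First I would fix notation: let $\rho$ generate the acting copy of $\mathbb{Z}$ (the shift of the \text{M\"{o}bius} band through its twisting point) and let $\tau=\tau_1$ generate the first $\mathbb{Z}$-summand of the base $\mathbb{Z}^n$, namely the automorphism of the critical set $X_1$. Setting $\tau_k=\rho^{k-1}\tau\rho^{-(k-1)}$ for $k=1,\dots,n$ identifies $\tau_k$ with the automorphism of $X_k$, since conjugation by $\rho$ cyclically shifts the critical sets. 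Because $\rho$ generates the top $\mathbb{Z}$ while the $\tau_k$ generate the base, the set $\{\rho,\tau\}$ generates $H$, which already yields the two-generator claim.

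Next I would derive the two families of relations from the geometry of the action. Since the base $\mathbb{Z}^n$ is abelian, any two of its elements commute, and in particular all conjugates $\rho^{i}\tau\rho^{-i}$ commute with one another, which is exactly the second relation. For the first relation, the defining feature of the \text{M\"{o}bius} band is that a full shift by $n$ critical sets returns each $X_k$ to itself but reverses the orientation (the winding of outer adjacency); hence conjugation by $\rho^{n}$ inverts the generator, giving $\rho^{n}\tau\rho^{-n}=\tau^{-1}$. Equivalently, the homomorphism $\phi\colon\mathbb{Z}\to\mathrm{Aut}(\mathbb{Z}^n)$ sends $\rho$ to the companion automorphism of $x^{n}+1$, which satisfies $\phi(\rho)^{n}=-\mathrm{id}$ and therefore has order $2n$ in $\mathrm{GL}_n(\mathbb{Z})$; this is precisely the homomorphism fixed above.

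The main step is to show that these relations are complete, i.e.\ that the abstractly presented group $\Gamma=\langle\rho,\tau\mid R\rangle$ coincides with $H$. I would consider the normal closure $N$ of $\tau$ in $\Gamma$. The commutation relations force $N$ to be abelian and generated by the elements $t_k=\rho^{k}\tau\rho^{-k}$, while $\rho^{n}\tau\rho^{-n}=\tau^{-1}$ gives $t_{k+n}=t_k^{-1}$; hence $N$ is generated by $t_0,\dots,t_{n-1}$ and is a quotient of $\mathbb{Z}^n$. Since $\Gamma/N\cong\mathbb{Z}$ is generated by the image of $\rho$, the canonical surjection $\Gamma\twoheadrightarrow H$ restricts to a surjection $N\twoheadrightarrow\mathbb{Z}^n$ onto the full rank-$n$ base; being a surjection from a quotient of $\mathbb{Z}^n$ onto $\mathbb{Z}^n$, it must be an isomorphism, so no collapse occurs and $\Gamma\cong\mathbb{Z}\ltimes_{\phi}\mathbb{Z}^n=H$. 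The main obstacle I anticipate is exactly this rank count, namely ruling out a proper collapse of the base; the explicit semidirect model settles it.

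Finally I would match the two presentations via $\tau_i=\rho^{i-1}\tau\rho^{-(i-1)}$. The relations $\rho\tau_i\rho^{-1}=\tau_{i+1}$ for $i<n$ hold by definition, and the boundary case $i=n$ reads $\rho\tau_n\rho^{-1}=\rho^{n}\tau\rho^{-n}=\tau^{-1}=\tau_1^{-1}$, which is the intended meaning of the index ``$\mathrm{mod}\ n$'' combined with the orientation reversal; the commutativity $\tau_i\tau_j=\tau_j\tau_i$ is inherited from the abelian base. The inverse correspondence sends $\tau\mapsto\tau_1$, so the two presentations define isomorphic groups, completing the argument.
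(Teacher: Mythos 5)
Your proof is correct in substance (for the intended range $n\ge 3$) but establishes completeness of the relations by a genuinely different route from the paper. The paper works upstairs with the free group $F_{n+1}$ on $\rho,\tau_1,\dots,\tau_n$: using the rewriting rule $\tau_i\rho=\rho\tau_{i+1}$ it brings every word to the canonical form $\rho^{k}\tau_1^{s_1}\cdots\tau_n^{s_n}$, argues that such a word maps to the identity of $H$ only when every exponent vanishes, and concludes that the kernel of the surjection $F_{n+1}\twoheadrightarrow H$ is the normal closure of the stated relators; the sign twist is then checked against the companion matrix $\phi$ with $\phi^{n}=-E$, $\phi^{2n}=E$. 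You instead analyse the abstract two-generator group $\Gamma=\langle\rho,\tau\mid R\rangle$ structurally: the normal closure $N$ of $\tau$ is abelian and generated by $t_0,\dots,t_{n-1}$, hence a quotient of $\mathbb{Z}^n$; the quotient $\Gamma/N$ is infinite cyclic; and the canonical surjection $\Gamma\to H$ carries $N$ onto the rank-$n$ base, forcing $N\cong\mathbb{Z}^n$ by the rank/Hopfian argument and then $\Gamma\cong H$ (here one should note explicitly that $\ker(\Gamma\to H)\subseteq N$, since the induced map $\Gamma/N\to H/\mathbb{Z}^n$ sends generator to generator). Your route is tighter exactly where the paper is weakest: the paper's assertion that a canonical word with a nonzero exponent vector is nontrivial in $H$ is precisely the no-collapse statement, which you actually prove; what the paper's rewriting approach buys in exchange is an explicit normal form for elements of $H$, which it reuses after the theorem.

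Two caveats, neither fatal. First, the commuting relators are only given for $0<i,j<n$, so the pairwise commutativity of all conjugates $t_a=\rho^{a}\tau\rho^{-a}$ (which you need for $N$ to be abelian) is not immediate: one must combine conjugation-invariance of relations, which yields $[t_a,t_b]=e$ whenever $|a-b|\le n-2$, with the twist $t_{a+n}=t_a^{-1}$ to reach the pairs with $|a-b|=n-1$. This works for $n\ge3$, but for $n=2$ the relation set is genuinely insufficient (the presented group is then a free-by-cyclic group, not $\mathbb{Z}\ltimes\mathbb{Z}^2$); this defect lies in the theorem's statement and affects the paper's own proof equally, but you should make the hypothesis $n\ge3$ explicit. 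Second, you rightly flag that presentation (\ref{relation}), read literally with indices reduced mod $n$ so that $\rho\tau_n\rho^{-1}=\tau_1$, presents the untwisted product, which is a non-isomorphic group (its centre has rank $2$, whereas the centre of $H$ is infinite cyclic, since no nonzero diagonal vector is fixed by the signed shift); your convention $\tau_{i+n}=\tau_i^{-1}$ is the only reading compatible with $\phi(\rho)^{n}=-\mathrm{id}$, and the paper makes the same identification silently rather than stating it.
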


\begin{proof}
From the description above, we have that the action of the first generator of the group, $\rho $, defines the homomorphism
in the $Aut(Z^n$). There exists such a diffeomorphism from $D(M)$, called the Dehn twist, which has an infinite order since
it makes a winding of outer adjacency (refer to Dehn twist in \cite{Step}) on the doubles ${{X}_{i}}$, and it belongs to
stabiliser $S(f)$. The generator $\tau$ must therefore correspond to this diffeomorphism.
	
Let $x_i$ denote the number of domains from $X_i$. The action of the first generator $\rho $, defines the homomorphism
$\rho ({{x}_{1}}, \ldots, {{x}_{n}})={{\varphi }^{\rho }}({{x}_{1}}, \ldots, {{x}_{n}})$, where  $\varphi ({{x}_{1}},
\ldots, {{x}_{n}})=(-{{x}_{n}},{{x}_{1}}, \ldots, {{x}_{n-1}})$. It should be noted that this action could be equivalently
represented as
	$${{\varphi }^{\rho }}({{x}_{1}},\ldots, {{x}_{n}})=({{(-1)}^{\left[ \frac{ \rho +n-1}{n} \right]}}(-{{x}_{(1- \rho
)modn}}), \ldots,
	{{(-1)}^{\left[ \frac{ \rho +n-k}{n} \right]}}{{x}_{(k- \rho  )modn}}, \ldots
	,{{(-1)}^{\left[ \frac{ \rho  }{n} \right]}}{{x}_{(n- \rho  )modn}}{{x}_{n}}).$$

We extend the action of $\rho$ onto an arbitrary $\alpha \in \mathbb{Z}$. This action involves sequential shifts of $X_i$
along the orbit on $M$ defined as $\alpha $. We have
	\[{{\varphi }^{\alpha }}({{x}_{1}},\ldots,{{x}_{n}})=({{(-1)}^{\left[ \frac{ \alpha +n-1}{n} \right]}}(-{{x}_{(1-
\alpha  )modn}}),\ldots\]
	\[\,\,\ldots,{{(-1)}^{\left[ \frac{ \alpha +n-k}{n} \right]}}{{x}_{(k- \alpha  )modn}},\,\,\ldots
	\,\,,{{(-1)}^{\left[ \frac{ \alpha  }{n} \right]}}{{x}_{(n- \alpha  )modn}}{{x}_{n}}).\]

We will now consider two sets. The first set is $~\alpha = \{ 1,2,\ldots,n \} $. As an example, if $\alpha =1$, then we
have $\left[ \frac{\alpha +n-1}{n} \right]=\left[ \frac{1+n-1}{n} \right]=1$. Additionally, we find the numbers $m\in N$
such that $\left[ \frac{m+n-1}{n} \right]=1$ and the numbers that are congruent to these $m$ modulo $2 n$.

The second set is $~\alpha = \{ 0,-1,-2,\ldots,-n+1 \}$. Similarly, we note that congruence modulo $2n$ is of interest. As
an example, if $\alpha =-1$, then we have $\left[ \frac{\alpha +n-1}{n} \right]=\left[ \frac{-1+n-1}{n} \right]=0$. Hence,
$\alpha \in \mathbb{Z}$, $\alpha = l \rho$ is the number of shifts defined by $ \alpha$, while ${{\tau }_{i}}$ corresponds
to the action of automorphism by permutation with winding of outer adjacency (Dehn twist in \cite{Step}) on the doubles
${{X}_{i}}$. We conclude that ${{\tau }_{i}}$ therefore has an infinite order due to the Dehn twist.
The value of the sign of the $x_i$ indicates the presence of a rotation of the doubles or its absence.
	
The relations for the non-minimal generating set is precisely
$$\left\langle \rho ,{{\tau }_{1}}, \ldots,{{\tau }_{n}}\left| \rho {{\tau }_{i\left( \text{mod} n \right)}}{{\rho
}^{-1}}={{\tau }_{i+1\left( \text{mod} n \right)}} \right. \right\rangle.$$

This formulation yields that the relations for the minimal generating set is
	$\left\langle \rho ,\,\tau \right\rangle$ are
	$$\left\langle {{\rho }^{2n}}{{\tau }_{1}}{{\rho }^{-2n}}={{\tau }_{1}},\,\,\left| {{\tau }_{1}} \right|=\left| \rho
\right|=\infty,  {{\tau }_{1}}= \tau \right\rangle,$$
where ${{\tau }_{1}}= \tau$ and since ${{\rho }^{2n}}{{\tau }_{i}}{{\rho }^{-2n}}= \tau _{i+1}$ we transform our minimal
generating set into a canonical generating set of the $n+1$ elements given by $\left\langle \rho ,\,\tau_1, \tau_2, \ldots
, \tau_n  \right\rangle $.

It is known that the generators of the semidirect product $G \ltimes H$ may be presented in the form $(g,h)$. We now
utilise this form to say that the generators of $Z^n$ have the form of vectors $\tau_1=(h_1, e,e,??,e,)$, $\tau_2=(e,
h_2,e,??,e,),??$, $\tau_n=(e, e,??, h_n)$.
Making us of the operation of conjugation for $(e, h_1)=\tau_1 = \tau$, allows us to express the second generator of $Z^n$.
Note that this is by $(g, e)=\rho $, where $h_1$ is one of generators of  $Z^n$ and $g$ is generator of $ Z$.

$${{(g,e)}^{-1}}~\left( e,\text{ }\tau_1 \right)(g,e)=~\left( e,\text{ }\tau_2  \right).$$

Analogously, we find
$${{(g,e)}^{-1}}~\left( e,\text{ }\tau_2 \right)(g,e)=~\left( e,\text{ }\tau_3 \right),$$
and, for a general term, we find
$${{(g,e)}^{-1}}~\left( e,\text{ }\tau_{n-1} \right)(g,e)=~\left( e,\text{ }\tau_n \right).$$
	
word from $F_n$  due to conjugation by $\rho$  we can transform it to form canonocal form.

We show that there are not otherwise independent relations within the group $H$. For this group $H$, all canonical words
have the form
\begin{equation}\label{canform}
{{\rho }^{k}}\tau _{1}^{{{s}_{1}}}\tau _{2}^{{{s}_{2}}}\ldots\tau _{n}^{{{s}_{n}}}.
\end{equation}

This form follows from the form of semidirect product elements. We now prove using (\ref{relation}) and reductions of
reciprocals elements, that we may transform any finite non-trivial word of ${{F}_{n+1}}$ to the form (\ref{canform}).
Additionally, we shall prove that the set of all words which maps trivially by a surjective homomorphism, with a kernel
which is a normal closure of relations from the set $R$ from (\ref{relation}), are those coincides with trivial words in
the group $\pi(O_f, f)$. For this purpose, we prove the transformation equivalence $~{{\tau }_{i}}\rho =\rho {{\tau
}_{i+1}}$. In fact,
\begin{equation}\label{transf}
{{\tau }_{i (\text{mod} \text {} n )}} \rho =
\rho {{\rho }^{-1}}{{\tau }_{i (\text{mod} \text {} n )}}\rho =
\rho {{\tau }_{(i+1)(\text{mod} \text {} n )}}.			
\end{equation}
It should be noted that the relation ${{\tau }_{i}}{{\tau }_{j}}={{\tau }_{j}}{{\tau }_{i}}$ holds since automorphisms of
${{X}_{i}}$ and ${{X}_{j}}$ are independent of each other. Therefore, using this transformation, we can rearrange all the
$\rho $ to be in the first position in the word over the alphabet $\{\rho ,{{\tau }_{1}},\ldots,{{\tau }_{n}}\}$.

We will show that normal closure of the relations $\rho {{\tau }_{i (\text{mod} \text{} n)}}{{\rho }^{-1}}={{\tau
}_{i+1\left(\text{mod} \text{ } n \right)}}$, with ${{\tau }_{i}}{{\tau }_{j}}={{\tau }_{j}}{{\tau }_{i}}$,
determines the kernel of the surjective homomorphism $\psi$ from ${{F}_{n+1}}$ to $H$. The images of such a mapping are the
canonical words (\ref{canform}) which have the form of $H$. The form of these canonical words are determined by the
semidirect product $\mathbb{Z}{{\ltimes  }} {{\left( \mathbb{Z} \right)}^{n}}$ and its automorphisms. This mapping $\varphi
$ has the form
$$x_{{{j}_{1}}}^{{{p}_{1}}}x_{{{j}_{2}}}^{{{p}_{2}}}x_{{{j}_{3}}}^{{{p}_{3}}}\,\ldots\,\,x_{{{j}_{m}}}^{{{p}_{m}}}\mapsto
{{\rho }^{k}}\tau _{1}^{{{s}_{1}}}\tau _{2}^{{{s}_{2}}}\,\,\ldots\,\,\tau _{n}^{{{s}_{n}}},$$
where
${{x}_{{{j}_{i}}}}\in {{F}_{n+1}},$
$x_{{{j}_{1}}}^{{{p}_{1}}}x_{{{j}_{2}}}^{{{p}_{2}}}x_{{{j}_{3}}}^{{{p}_{3}}}\,\ldots\,\,x_{{{j}_{m}}}^{{{p}_{m}}} \in
{{F}_{n+1}}$ and  $\sum\limits_{i=1}^{n}{{{s}_{i}}+k}=\sum\limits_{l=1}^{m}{{{p}_{l}}}.$

For this purpose, we use the transformation equivalence $~{{\tau }_{i}}\rho =\rho {{\tau }_{i+1}}.$ Making use this
transformation allows us to therefore rearrange all $\rho $ to the first position in the word over the alphabet $\{\rho
,{{\tau }_{1}},\ldots,{{\tau }_{n}}\}$. Being a little more precise, this conversion is expressed as
$$~{{\tau }_{i (\text{mod} n)}}\rho =\rho {{\rho }^{-1}}{{\tau }_{i (\text{mod} n)}}\rho =\rho {{\tau }_{(i+1) (\text{mod}
n)}}.$$

The kernel of surjective homomorphism $\psi$ contains exactly those words that, after mapping, $\psi$ becomes the trivial
words in the group $H$ since those trivial words have the form $\rho^0 \tau_1^{0} \tau_2^{0} \ldots \tau_n^{0}$.

Note that an arbitrary word from $ker(\psi)$ may be transformed due to (\ref{transf}) into $\rho^0 \tau_1^{i_1}
\tau_2^{i_2} \ldots \tau_n^{i_n}$, where $i_k=0$ for all $k$. In fact, $ker(\psi)$ is the normal closure of the relations
(\ref{relation}) and hence it consist of the words
$\rho {{\tau }_{i\left( \,\text{mod} \text{ } n \right)}}{{\rho }^{-1}} {{\tau }^{-1}_{i+1\left( \,\text{mod} \text{ } n
\right)}}$, with $[\tau_i,\tau_{i+1}]$.

In particular, the word $\rho {{\tau }_{i\left(\text{mod} \text{ } n \right)}}{{\rho }^{-1}} {{\tau
}^{-1}_{i+1\left(\text{mod} \text{} n \right)}}$ transforms by (\ref{transf}) to $\rho {{\rho }^{-1}} {{\tau
}_{i+1\left(\text{mod} \text{} n \right)}} {{\tau }^{-1}_{i+1\left(\text{mod} \text{} n \right)}}.$ The words from the
normal closure must therefore have zero sum of powers for each generator.

In the real group $H$, with the reduced canonical words (\ref{canform}),
where all generators have infinite order, only those words with zero exponents of generators are trivial. We have therefore
found all such relations.
which concludes the proof.
\end{proof}

\vspace{5.0mm}

It should be noted that the main property of the homomorphism $\varphi $, from ${{F}_{n+1}}$ onto $H$, holds due to the
same transformation (\ref{transf}). We now consider
$$
\begin{array}{r}{\varphi(a b)=\varphi(a) \varphi(b)=\varphi\left(x_{j_{1}}^{p_{1}} x_{j_{2}}^{p_{2}} \ldots
x_{j_{m}}^{p_{m}}\right) \varphi\left(x_{i_{1}}^{q_{1}} x_{i_{2}}^{q_{2}} \ldots x_{i_{m}}^{q_{m}}\right)} \\ {=\rho^{k}
\tau_{1}^{s_{1}} \tau_{2}^{s_{2}} \ldots \tau_{n}^{s_{n}} \rho^{m} \tau_{1}^{j_{1}} \tau_{2}^{j_{2}} \ldots
\tau_{n}^{j_{n}}=\rho^{k+m} \tau_{1}^{f_{1}} \tau_{2}^{f_{2}} \ldots \tau_{n}^{f_{n}}}\end{array}
$$
Thus, the main property of the homomorphism holds. It should be noted that such a relation ${{\rho }^{2n}}{{\tau
}_{1}}{{\rho }^{-2n}}={{\tau }_{1}}$ is typical for a wreath product.

The homomorphism from the group $\mathrm{Z}$ into the group $Aut{\mathrm{Z}^{n}}$,  determining a shift of generators
$(\tau_1,\ldots,\tau_n)$ of $\mathrm{Z}^{n}$, can be equivalently presented by the matrix $\phi$. For the case $n=4$,
$\phi$ has the form
	$$\phi =\left( \begin{matrix}
	0 & 0 & 0 & -1  \\
	1 & 0 & 0 & 0  \\
	0 & 1 & 0 & 0  \\
	0 & 0 & 1 & 0  \\
	\end{matrix} \right).$$
The generators of the subgroup ${\mathrm{Z}^{n}}$ can be presented in the form of vectors. These vectors are precisely
$$(h_1, e,e,e), (e,h_2, e,e), \ldots ,(e, e, e, h_4).$$
In order to check the relation for the case $n=4$, we consider
$$\phi^4 =\left( \begin{matrix}
   -1 & 0 &  0 & 0  \\
	0 & -1& 0  & 0  \\
	0 & 0 & -1 & 0  \\
	0 & 0 & 0  & -1 \\
	\end{matrix} \right).$$
Thus,  $\phi^8=E$ and our relation ${{\rho }^{2n}}{{\tau }_{1}}{{\rho }^{-2n}}={{\tau }_{1}}$ holds.

It should be noted that the research of Maksymenko S. \cite{Maks} tells us that a group of this kind arises as a
fundamental group of the orbit ${{\pi }_{1}}({{O}_{f}}, f)$ for some Morse function $f$ which, as described above, acts
upon the \text{M\"{o}bius} band $M$.
	
\vspace{5.0mm}

Note that we have derived the relation ${{\rho }^{2n}}\tau {{\rho }^{-2n}}=\tau$. If we now multiply this from left on
${{\tau }^{-1}}$, we can equivalently express this as $${{\tau }^{-1}}{{\rho }^{2n}}\tau {{\rho }^{-2n}}=e.$$
In a similar fashion, the multiplication from the right on ${{\rho }^{2n}}$ obtains
$${{\tau }^{-1}}{{\rho }^{2n}}\tau ={{\rho }^{2n}}.$$

One such relation characterises the Bauslag-Soliter group. This is the group $G (m; k)$, which has the form $G(m;n)\text{
}=\left\langle a,b;{{a}^{-1}}{{b}^{m}}a={{b}^{k}} \right\rangle,$ where $m, k \in \mathbb{Z}$.
	Note the Bauslag-Soliter group has only one relation.

$Z$ which is elements multiple of $\rho^{2n}$ because we make a reduction by mod $2n$.

\begin{corollary}
A centre of the  group $H = \mathbb{Z}{{\ltimes  }}_{\varphi } {{\left( \mathbb{Z} \right)}^{n}}$ is a normal closure of
sets: diagonal of $\mathrm{Z}^{n}$,  trivial an element and kernel of action by conjugation that is generated by ${{\rho
}^{2n}}$ ($\langle {{\rho }^{2n}} \rangle \simeq 2n \mathrm{Z} $). In other words,
$$Z(H)=\langle ( 1 ; \,\, \underbrace{h,h,\ldots,h}_{n}),\,\,e,\,\,  2n \mathrm{Z} \ltimes \mathrm{ E }  \rangle,$$ where
$h,\,\,g\in \mathrm{Z}$.
Thus, $Z(H) \simeq 2n\mathbb{Z} \times \mathbb{Z}. $
Since the action is defined by conjugation and relation ${{\rho }^{2n}}{{\tau }_{i}}{{\rho }^{-2n}}={{\tau }_{i}}$ holds
then the element ${{\rho }^{2n}}$ commutates with every ${{\tau }_{i}}$. So subgroup stabilise all $x_i$ of $Z$-space $M$.
Other words subgroup $\langle{{\rho }^{2n}}\rangle$ belongs to kernel of action $\phi$. Besides the element $(1  ; \,\,
\underbrace{h,h,\ldots,h}_{n})$ will not be changed by action of conjugation of any element from $H$ because any
permutation elements coordinate of diagonal of $\mathrm{Z}^{n}$ does not change it.
\end{corollary}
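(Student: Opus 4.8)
The plan is to compute $Z(H)$ directly from the semidirect-product structure, in the same spirit as the preceding Proposition. I would write a general element of $H=\mathbb{Z}\ltimes_{\varphi}(\mathbb{Z})^{n}$ as a pair $(\rho^{k},\mathbf{v})$ with $k\in\mathbb{Z}$ and $\mathbf{v}=(v_{1},\ldots,v_{n})\in(\mathbb{Z})^{n}$, where the product is $(\rho^{k},\mathbf{v})(\rho^{l},\mathbf{w})=(\rho^{k+l},\mathbf{v}+\varphi^{k}(\mathbf{w}))$ and $\varphi(\mathbf{v})=(-v_{n},v_{1},\ldots,v_{n-1})$. Since $H=\langle\rho,\tau_{1},\ldots,\tau_{n}\rangle$, an element is central if and only if it commutes with $\rho=(\rho,\mathbf{0})$ and with each $\tau_{i}=(\rho^{0},\mathbf{e}_{i})$. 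Writing out these two commutation requirements reduces the whole problem to two independent conditions: a condition on the exponent $k$ coming from the $\tau_{i}$, and a condition on the vector $\mathbf{v}$ coming from $\rho$.

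First I would isolate the $\langle\rho^{2n}\rangle$ factor. Commuting $(\rho^{k},\mathbf{v})$ with $\tau_{i}$ forces $\varphi^{k}(\mathbf{e}_{i})=\mathbf{e}_{i}$ for every $i$, i.e. $\varphi^{k}$ is the identity on $(\mathbb{Z})^{n}$. Because the matrix $\phi$ exhibited above satisfies $\phi^{n}=-E$ and hence $\phi^{2n}=E$ (checked explicitly for $n=4$), this is equivalent to $2n\mid k$. Conversely, the already-established relation $\rho^{2n}\tau_{i}\rho^{-2n}=\tau_{i}$ shows that $\rho^{2n}$ commutes with every $\tau_{i}$, and trivially with $\rho$. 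Thus the set of admissible exponents is exactly $2n\mathbb{Z}$, contributing the central subgroup $\langle\rho^{2n}\rangle\simeq 2n\mathbb{Z}$, which is precisely the kernel of the conjugation action $\phi$, so it stabilises every $x_{i}$ of the $\mathbb{Z}$-space $M$.

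It remains to determine the base contribution and to assemble the product. Commuting $(\rho^{k},\mathbf{v})$ with $\rho$ forces $\varphi(\mathbf{v})=\mathbf{v}$, so the task is to describe the $\varphi$-invariant vectors; following the template of the preceding Proposition I expect these to be the diagonal vectors $(\underbrace{h,\ldots,h}_{n})$, each of which is left unchanged by conjugation since conjugation only permutes its equal coordinates, giving the copy of $Z(\bigtriangleup(\mathbb{Z}^{n}))\simeq\mathbb{Z}$ named in the statement. Finally I would verify that $\langle\rho^{2n}\rangle$ and the diagonal commute, generate all of $Z(H)$, and meet only in $e$, which produces the claimed internal direct product $Z(H)=\langle(1;\underbrace{h,\ldots,h}_{n}),\,e,\,2n\mathbb{Z}\ltimes E\rangle\simeq 2n\mathbb{Z}\times\mathbb{Z}$. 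The hard part will be the reverse inclusion on the base: pinning down exactly which vectors are fixed by the twisted shift $\varphi$ and confirming that no element with $\mathbf{v}$ off the diagonal, and no exponent $k\notin 2n\mathbb{Z}$, can enter the centre, exactly as in the verification carried out for the Proposition above.
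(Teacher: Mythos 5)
Your setup is the right one, and it is in fact more careful than the paper's own argument: writing elements as pairs $(\rho^{k},\mathbf{v})$, reducing centrality to commutation with $\rho$ and with the $\tau_{i}$, and extracting the exponent condition $\varphi^{k}=\mathrm{id}$, i.e. $2n\mid k$, from $\phi^{n}=-E$, $\phi^{2n}=E$ is exactly how the $\langle\rho^{2n}\rangle$ factor is obtained (implicitly) in the paper. The genuine gap is the step you defer to the end and label ``the hard part'': determining the vectors fixed by the twisted shift. You expect them to be the diagonal, by analogy with the preceding Proposition, justified by saying that conjugation ``only permutes its equal coordinates.'' But for the action you yourself wrote down, $\varphi(v_{1},\ldots,v_{n})=(-v_{n},v_{1},\ldots,v_{n-1})$, this is false: conjugation by $\rho$ permutes the coordinates \emph{and negates one of them}. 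Solving $\varphi(\mathbf{v})=\mathbf{v}$ gives $v_{2}=v_{1},\ v_{3}=v_{2},\ \ldots,\ v_{n}=v_{n-1}$ together with $v_{1}=-v_{n}$, hence $v_{1}=\cdots=v_{n}$ and $2v_{1}=0$, so $\mathbf{v}=\mathbf{0}$ because $\mathbb{Z}$ is torsion-free. The diagonal therefore contributes nothing, and your own computation, carried out honestly, yields $Z(H)=\{(\rho^{k},\mathbf{0}):2n\mid k\}=\langle\rho^{2n}\rangle\simeq\mathbb{Z}$ rather than $2n\mathbb{Z}\times\mathbb{Z}$.

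The analogy with the Proposition (and with the following Corollary on $(\mathbb{Z},X)\wr\mathbb{Z}$) breaks precisely because those statements concern genuine permutation actions, pure cyclic shifts, for which diagonals are invariant and the kernel is $n\mathbb{Z}$. Here the Möbius twist is exactly what prevents the action from being a permutation action: the same sign that forces the kernel to be $2n\mathbb{Z}$ instead of $n\mathbb{Z}$ also kills the diagonal. Indeed the statement is internally inconsistent: one cannot have both a kernel generated by $\rho^{2n}$ (which requires the twist) and a central diagonal (which requires no twist). The paper's own text commits the identical slip (``any permutation elements coordinate of diagonal \ldots does not change it''), so your proposal reproduces the paper's conclusion along with its error; but as a proof it does not close, and it fails at exactly the point you flagged as remaining to be checked.
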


We generalize the result of Meldrum J. \cite{Meld} because we consider not only the permutation wreath product groups,
but the group $\mathcal{A}$ does not have to act upon the set $X$ faithfully, hence $(\mathcal{A}, X)\wr \mathcal{B}$ is not regular wreath product.
Recall that an action is said to be faithful if for every $g\in G$, there exists $x$ from $G$-space $X$ such that $x^g \neq
x$. We consider wreath products with no regular actions of the active group $\mathbb{Z}$. 

Let $X=\{ x_1, x_2, \ldots ,x_n \} $ be the $\mathbb{Z}$-space. If an action by conjugation determines a shift of the
copies of $\mathbb{Z}$ from the direct product $\mathbb{Z}^n$ then, we have not found a standard wreath product
$(\mathbb{Z}, X) \wr \mathbb{Z}$ which is a semidirect product of $\mathbb{Z}$ and $\prod \limits_{x_i \in X} \mathbb{Z} $,
i.e. $\mathbb{Z}{{\ltimes}}_{\phi } {{\left( \mathbb{Z} \right)}^{n}}$. Thus, we observe the following corollary holds.

\begin{corollary}
The centre of the group $ \mathbb{Z}{{\ltimes  }}_{\phi } {{\left( \mathbb{Z} \right)}^{n}} \simeq (\mathbb{Z}, X) \wr
\mathbb{Z}$ consists of normal closure of the diagonal of $\mathrm{Z}^{n}$, a trivial element and the kernel of action by
conjugation, i.e. $n \mathrm{Z} $. In other words, $$Z(H)=\langle \, 1 ; \,\, \underbrace{h,h,\ldots,h}_{n}),\,\,e,\,\,
(n \mathbb{Z}, X) \wr \mathbb{E} \rangle  \simeq n\mathbb{Z} \times \mathbb{Z} ,$$ where $h,\,\,g\in \mathrm{Z}$, $Z(H)
\simeq n\mathbb{Z} \times \mathbb{Z} $.
\end{corollary}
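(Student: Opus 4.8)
The plan is to recognise this corollary as the special case of Proposition 1 in which $\mathcal{A} = \mathcal{B} = \mathbb{Z}$ and the active copy of $\mathbb{Z}$ acts on $X = \{x_1, \ldots, x_n\}$ by the pure cyclic shift (without the sign twist that appears in Corollary 1). The first step is to compute the two data that Proposition 1 requires. Since $\mathcal{B} = \mathbb{Z}$ is abelian, $Z(\mathcal{B}) = \mathbb{Z}$, so the centre of the diagonal $Z(\bigtriangleup(\mathbb{Z}^n))$ is the whole diagonal $\{(h, \ldots, h) : h \in \mathbb{Z}\} \simeq \mathbb{Z}$. The kernel $\mathcal{K}$ of the action of $\mathbb{Z}$ on $X$ is exactly $n\mathbb{Z}$, because the cyclic shift returns to the identity permutation precisely after $n$ steps; and as $\mathcal{A} = \mathbb{Z}$ is abelian, $Z(\mathcal{A}) \cap \mathcal{K} = \mathbb{Z} \cap n\mathbb{Z} = n\mathbb{Z}$. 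Substituting into Proposition 1 yields $Z(H) \simeq (Z(\mathcal{A}) \cap \mathcal{K}) \times Z(\bigtriangleup(\mathbb{Z}^n)) \simeq n\mathbb{Z} \times \mathbb{Z}$, the claimed isomorphism.

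For a self-contained argument I would instead verify centrality directly in the semidirect product, using the semidirect-product multiplication defined above. Writing a general element as $(a; f_1, \ldots, f_n)$ with $a \in \mathbb{Z}$ and $(f_1, \ldots, f_n)$ in the base $\mathbb{Z}^n$, I would test centrality against two families. Commuting $(a; f_1, \ldots, f_n)$ with every base element $(0; g_1, \ldots, g_n)$ forces the shift by $a$ to fix every tuple, hence $a \in n\mathbb{Z}$; commuting it with every active element $(b; 0, \ldots, 0)$ forces $(f_1, \ldots, f_n)$ to be fixed by the shift, hence $f_1 = \cdots = f_n = h$, i.e. the base part lies on the diagonal. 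Conversely, any pair $(a; h, \ldots, h)$ with $a \in n\mathbb{Z}$ is immediately seen to commute with both families and hence with all of $H$, so $Z(H) = \{(a; h, \ldots, h) : a \in n\mathbb{Z}, \ h \in \mathbb{Z}\}$.

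Finally I would record that these central elements split as a direct product, matching the stated generating description. The subgroup $\langle \rho^n \rangle \simeq n\mathbb{Z}$ of active elements and the diagonal subgroup $\{(0; h, \ldots, h)\} \simeq \mathbb{Z}$ commute, because the diagonal is shift-invariant and is therefore fixed under conjugation by $\rho^n$; they meet only in the identity, so $Z(H) = \langle \rho^n \rangle \times \{(0; h, \ldots, h)\} \simeq n\mathbb{Z} \times \mathbb{Z}$. I do not anticipate a genuine obstacle; the argument is routine semidirect-product bookkeeping, and the only two points that truly need to be pinned down are that the fixed-point set of the cyclic shift on $\mathbb{Z}^n$ is exactly the diagonal (which is what makes $Z(\bigtriangleup(\mathbb{Z}^n))$ the full diagonal rather than a proper subgroup) and that the shift has order $n$, so that its kernel on $X$ is precisely $n\mathbb{Z}$ and not some larger or smaller subgroup.
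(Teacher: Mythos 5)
Your proposal is correct, and its first paragraph is in substance the paper's own argument: specialise the paper's general description of the centre of a non-faithful wreath product, computing that the kernel of the shift action on $X$ is $n\mathbb{Z}$ and that the relevant diagonal is the full diagonal $\simeq \mathbb{Z}$. The only difference in route is which prior result is invoked: the paper formally derives this corollary from Corollary 1 (the sign-twisted group $\mathbb{Z}\ltimes_{\varphi}\mathbb{Z}^{n}$ arising from the Morse function, whose kernel of action is $2n\mathbb{Z}$), adjusting the kernel to $n\mathbb{Z}$ because the pure shift carries no sign flip, whereas you cite Proposition 1 directly; since Proposition 1 is stated for exactly the groups $(\mathcal{A},X)\wr\mathcal{B}$ in question, your citation is arguably the cleaner one. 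Your second and third paragraphs then supply what the paper leaves as a sketch: the paper never actually tests centrality against the two families of elements, and it even contains the imprecise assertion that ``the kernel of this action has elements from the diagonal of $\mathbb{Z}^{n}$'' (the diagonal is the fixed-point set of the conjugation action on the base, not part of the kernel of the action on $X$), whereas your computation with the multiplication rule $(a_{1},f_{1})(a_{2},f_{2})=(a_{1}a_{2},f_{1}f_{2}^{a_{1}})$ pins down both inclusions and the direct-product splitting correctly. The one item in the paper's proof you do not reproduce is its closing aside that the iterated product $\mathbb{Z}\wr_{X_{m}}\mathbb{Z}\wr_{X_{n}}\mathbb{Z}$ has centre $n\mathbb{Z}\times m\mathbb{Z}\times\mathbb{Z}$, but that remark lies outside the stated corollary, so its omission is not a gap.
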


\begin{proof}
The proof follows immediately from Corollary 1 by utilising the kernel of action of $\phi$. The stabiliser of such an
action over the $\mathbb{Z}$-space $X=\{ x_1, x_2, \ldots , x_n \} $ is the subgroup $n\texttt{Z}$. Additionally, the
kernel of this action has elements from the diagonal of $\mathbb{Z}^n$.

It should be noted, if we have $G= \mathbb{Z} \underset{X_m} {\wr} \mathbb{Z} \underset{X_n}{\wr} \mathbb{Z},$ where $| X_m
| =m$ and $| X_n | = n$, then the action is defined by the shift upon finite set $X_n$. In this case, we find that $|X|= n$
is not faithful and its kernel is also isomorphic to $n \mathbb{Z}$ since the cyclic shift on the $n$ coordinates is
invariant on $X$. Note that the action is defined by the shift on the finite set $X_m$ is not faithful and its kernel is
isomorphic to $ m \mathbb{Z}$. Additionally, within this kernel of action is the elements from the diagonal of
$\mathbb{Z}^{nm}$ which are isomorphic to $\mathbb{Z}$. Thus, its centre is $Z(G) \simeq n \mathbb{Z} \times m \mathbb{Z}
\times \mathbb{Z}$ which concludes the proof.
\end{proof}


\begin{remark}
The centre of a group of the form $\mathbb{Z}{{\ltimes  }}_{\phi } {{\left( \mathcal{B} \right)}^{n}} \simeq (\mathbb{Z},
X) \wr \mathcal{B}$ generates, by normal closure of: centre of diagonal of $\mathcal{B}^{n}$,  trivial an element, and $n
\mathrm{Z}  {\underset{X} {\wr}}  \mathcal{E}$.
\end{remark} 

\section{Conclusion}
The minimal generating set and the structure of the group $\pi_{0}S_{id}(f)$ of the orbit one Morse function have been
investigated. The minimal generating set for wreath-cyclic groups have been constructed.













\end{document}